\documentclass[12pt, a4paper]{article}

\usepackage{amsmath,amsfonts,amsthm,amssymb,mathrsfs}
\usepackage{enumerate}
\usepackage[margin=3cm]{geometry}
\usepackage[all]{xy}

\usepackage[bookmarks,colorlinks,linkcolor=black,citecolor=black]{hyperref}

\numberwithin{equation}{subsection}

\theoremstyle{plain}
\newtheorem{thm}{THEOREM}[section]
\newtheorem{definition}[thm]{DEFINITION}
\newtheorem{corol}[thm]{COROLLARY}
\newtheorem{lemma}[thm]{LEMMA}
\newtheorem{prop}[thm]{PROPOSITION}
\newtheorem{rem}[thm]{REMARK}

\DeclareMathOperator{\Ext}{Ext}
\DeclareMathOperator{\Hom}{Hom}

\newenvironment{enumerate-1}{%
\hspace{1cm}
\begin{itemize}%
\setlength{\itemsep}{0mm}%
\setlength{\parskip}{0.3ex plus 0.1ex minus 0.1ex}%
\setlength{\parsep}{0mm}%
\setlength{\partopsep}{0mm}%
}
{%
\end{itemize}%
}

\newcommand{\C}{$C^*$-algebra}
\newcommand{\G}{\mathcal{G}}
\newcommand{\F}{\mathcal{F}}
\newcommand{\K}{\mathcal{K}}
\newcommand{\R}{\mathbb{R}}

\title{\bf $K$-theory for the Leaf Spaces of the Orbit Foliations of the co-adjoint Action of some 5-dimensional Solvable Lie groups}
\author{Le Anh Vu\footnote{Faculty of Economic Mathematics, University of Economics and Law, Vietnam National University - Ho Chi Minh City, Viet Nam. E-mail: vula@uel.edu.vn} \,\, \and Nguyen Anh Tuan \,\, \and Duong Quang Hoa}
\date{}

\begin{document}

\maketitle

\begin{abstract}
	In this paper, combining Kirillov's method of orbits with Connes' method in Differential Geometry, we study the so-called MD(5,3C)-foliations, i.e. the orbit foliations of the co-adjoint action of MD(5,3C)-groups. First, we classify topologically MD(5,3C)-foliations based on the classification of all MD(5,3C)-algebras in \cite{Vu-Sh} and the picture of co-adjoint orbits ($K$-orbits) of all MD(5,3C)-groups in \cite{Vu-Th}. Finally, we study $K$-theory for leaf space of MD(5,3C)-foliations and describe analytically or characterize Connes' $C^*$-algebras of the considered foliations by $KK$-functors. 
\end{abstract}

{\bf Keywords:} \footnotesize{$K$-orbits, MD(5,3C)-groups, MD(5,3C)-algebras, MD(5,3C)-foliations, Connes' $C^*$-algebra associated to a measurable foliation.}

{\bf MSC(2000):} \footnotesize{Primary 22E45, Secondary 46E25, 20C20.}

\section*{Introduction}
	Let $G$ be an $n$-dimensional real solvable Lie group, then $G$ is called a MD$n$-group or a MD-group of dimension $n$ if and only if the orbits of $G$ in the co-adjoint representation (K-representation) are orbits of dimension zero or maximal dimension (i.e. dimension $k$, where $k$ is some even constant, $k \leq n$). When $G$ is an MD$n$-group such that its Lie algebra $\mathcal{G}$ has $m$-dimensional derived ideal ${\mathcal{G}}^{1}:= [\mathcal{G}, \mathcal{G}]$ \,($m < n$) then $G$ is called an MD($n,m$)-group and $\mathcal{G}$ is called an MD($n,m$)-algebra. In particular, if $\mathcal{G}^1$ is commutative then $G$ is called an MD($n,mC$)-group and $\mathcal{G}$ is called an MD($n,mC$)-algebra. The problem of classifying MD-algebras, describing the $K$-representation of MD-groups and characterizing the $C^*$-algebras of MD-groups is significant. Note that, if $n < 4$, all the n-dimensional Lie algebras are MD-algebras, and moreover, they can be listed easily. Therefore, we only take interests in MD$n$-algebras (and MD$n$-groups) for $n \geqslant 4$. If $G$ is a certain MD$n$-group, then the family $\mathcal{F}_{G}$ of its K-orbits of maximal dimension forms a foliation which is called is called MD$n$-foliation associated with $G$ (or with the Lie algebra Lie($G$) of $G$). When $G$ is an MD($n,m$)-group or MD($n,mC$)-group then $\mathcal{F}_{G}$ is also called MD($n,m$)-foliation or MD($n,mC$)-foliation, respectively.

In 1990, Vu classified all MD4-algebras, gave a topological classification of all MD4-foliations and described all Connes' $C^*$-algebras of them by using the $KK$-functors in 1992 (see \cite{Vu1}, \cite{Vu2}, \cite{Vu3}, \cite{Vu4}). Recently, Vu and his colleagues study MD5-algebras, MD5-groups and some related problems (see \cite{Vu-Ho07}, \cite{Vu-Sh}, \cite{Vu-Ho09}, \cite{Vu-Ho10}, \cite{Vu-Ho11}, \cite{Vu-Hi-Ng}). Up to the present moment, there is no complete classification for MD$n$-algebras with $n > 5$. In \cite{Vu-Ho-Tu}, we considered all MD(5,4)-algebras and analytically described or characterized Connes' $C^*$-algebras of MD(5,4)-foliations by using $KK$-functors.

This paper is a continuation of authors' works \cite{Vu-Ho09}, \cite{Vu-Ho10}, \cite{Vu-Ho11}, \cite{Vu-Ho-Tu}. Basing on the results of \cite{Vu-Sh} and \cite{Vu-Th}, we consider MD(5,3C)-foliations. The paper is organized as follows: Section 1 deals with the classification of all MD(5,3C)-algebras (\cite{Vu-Sh}). Section 2 studies MD(5,3C)-foliations associated to classified MD(5,3C)-algebras. Section 3 is devoted to study $K$-theory for the leaf space of considered MD(5,3C)-foliations and characterize the Connes' $C^*$-algebras associated to these foliations.

In another paper, we will consider the analogue problem for the subclass of MD(5,3)-foliations and the entire class of MD5-foliations.

\section{The class of MD5-algebras having 3-dimensional commutative dereived ideals}

	In this section, we recall the classification of all MD(5,3C)-algebras.

	Let $G$ be a Lie group and $\G = {\rm Lie}(G)$ be the Lie algebra of $G$. We use $\G^*$ to denote the dual space of $\G$. For every $g \in G$, we denote the internal automorphism associated with $g$ by $A_g$, and whence, $A_g: G \rightarrow G $ can be defined as follows:
		\[
			A_g (x) := g.x.g^{-1},\, \forall x \in G.
		\]
This above automorphism induces the following mapping:
		\[
		{A_{(g)}}_{*}:\mathcal{G}\longrightarrow \mathcal{G}\qquad \qquad \qquad \qquad $$
$$\qquad\qquad \qquad \qquad\qquad X\longmapsto {A_{(g)}}_{*}(X):\, =\;\frac{d}{dt}
[g.exp(tX)g^{-1}]\mid_{t=0}
		\]
which is called \emph{the tangent mapping} of $A_{(g)}$.

We now formulate the following definitions.

\begin{definition}The action
$$Ad :G\longrightarrow Aut(\mathcal{G})$$
$$\qquad\qquad\qquad g\longmapsto Ad(g):\, =\;  {A_{(g)}}_{*}$$
is called the adjoint representation of G in $\mathcal{G}$.\\
\end{definition}

\begin{definition}The action
$$K:G\longrightarrow Aut(\mathcal{G}^{*})$$
$$g\longmapsto K_{(g)}$$
such that
$$\langle K_{(g)}F,X\rangle :\,=\langle F, Ad(g^{-1})X\rangle ;
\quad(F\in {\mathcal{G}}^{*},\, X \in \mathcal{G})$$ is called the
co-adjoint representation or K-representation of G in $\mathcal{{G}^{*}}$.\\
\end{definition}

\begin{definition} Each orbit of the co-adjoint representation of $G$ is called a $K$-orbit of $G$.
\end{definition}

Thus, for every $F \in \mathcal{G}^{*}$, the K-orbit containing $F$ defined above can be written by
$$\Omega_F:= \{K_{(g)}F / g \in G \}.$$

\begin{definition}[{{\bf \cite[Chapter 4, Definition 1.1]{Di99}}}]
	An n-dimensional MD-group (for brevity, MDn-group) is an n-dimensional real solvable Lie group such that its K-orbits are orbits of dimension zero or maximal dimension. The Lie algebra of an MDn-group is called an n-dimensional MD-algebra (for short, MDn-algebra).
\end{definition}

	Recently, the class of all MD5-algebras has been completely classified (see \cite{Vu-Sh}, \cite{Vu-Hi-Ng}), up to isomorphism. In this paper, we just consider a subclass of MD(5,3C)-algebras, i.e. MD5-algebras having 3-dimensional commutative derived ideals.

\begin{prop}[{\cite{Vu-Sh}}]\label{MD(5,3C)-algebras}
	Let $\G$ be an indecomposable MD5-algebra whose $\G^1:=[\G, \G] \cong \R^3$. Then we can choose a suitable basis $\left\lbrace X_1,X_2,X_3,X_4,X_5 \right\rbrace$ of $\G$ such that $\G^1 = \R.X_3 \oplus \R.X_4 \oplus \R.X_5 \equiv \R^3$; $[X_{1},X_{2}]= X_{3}$,$ad_{X_1} = 0 $, $ad_{X_2} \in {\rm End}\left(\G^1\right) \equiv {\rm Mat}_3(\R)$ and $\G$ is isomorphic to one and only one of the following Lie algebra.

    \begin{itemize}
     \item[\bf 1.]${\mathcal{G}}_{5,3,1({\lambda}_{1}, {\lambda}_{2})}:\,
 ad_{{X}_2} = \begin{pmatrix} {{\lambda}_1}&0&0\\
0&{{\lambda}_2}&0\\0&0&1 \end{pmatrix}; \quad {\lambda}_1,
{\lambda}_2 \in \R\setminus \lbrace 0, 1\rbrace, \, {\lambda}_1
\neq {\lambda}_2.$

        \item[\bf 2.]${\mathcal{G}}_{5,3,2(\lambda)}:\, 
ad_{{X}_2} = \begin{pmatrix} 1&0&0\\
0&1&0\\0&0&{\lambda} \end{pmatrix}; \quad {\lambda} \in
\R\setminus \lbrace 0, 1 \rbrace .$ 

        \item[\bf 3.]${\mathcal{G}}_{5,3,3(\lambda)}:\,
ad_{{X}_2} = \begin{pmatrix} {\lambda}&0&0\\ 0&1&0\\0&0&1 \\ \end{pmatrix};
\quad {\lambda} \in \R\setminus \lbrace 1 \rbrace . $

        \item[\bf 4.]${\mathcal{G}}_{5,3,4}:\,
ad_{{X}_2} = \begin{pmatrix} 1&0&0\\
0&1&0\\0&0&1 \end{pmatrix}.$

        \item[\bf 5.]${\mathcal{G}}_{5,3,5(\lambda)}:\,
 ad_{{X}_2} = \begin{pmatrix} {\lambda}&0&0\\
 0&1&1\\0&0&1 \end{pmatrix}; \quad{\lambda} \in \R\setminus \lbrace 1 \rbrace .$

        \item[\bf 6.]${\mathcal{G}}_{5,3,6(\lambda)}:\,
 ad_{{X}_2} = \begin{pmatrix} 1&1&0\\
 0&1&0\\0&0&{\lambda} \end{pmatrix}; \quad
 {\lambda} \in \R\setminus \lbrace 0, 1 \rbrace .$

        \item[\bf 7.]${\mathcal{G}}_{5,3,7}:\,
 ad_{{X}_2} = \begin{pmatrix} 1&1&0\\
 0&1&1\\0&0&1 \end{pmatrix}.$

        \item[\bf 8.]${\mathcal{G}}_{5,3,8(\lambda, \varphi)}:\,
 ad_{{X}_2} = \begin{pmatrix} cos{\varphi}&-sin{\varphi}&0\\
 sin{\varphi}&cos{\varphi}&0\\0&0&\lambda \end{pmatrix}; \quad
 \lambda \in \R\setminus \lbrace 0\rbrace, \, \varphi \in (0, \pi) .$
        \end{itemize}
\end{prop}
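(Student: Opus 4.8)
The statement is quoted from \cite{Vu-Sh}; I indicate the structure-theoretic route I would take. \emph{Normal form of the bracket.} Since $\G^1\cong\R^3$ is an abelian ideal, $\G/\G^1$ is $2$-dimensional and abelian, and the restriction of $\mathrm{ad}$ to $\G^1$ annihilates $\G^1$, so it descends to a linear map $\rho\colon \G/\G^1\to\mathrm{End}(\G^1)\equiv\mathrm{Mat}_3(\R)$. The Jacobi identity gives $[\rho(\bar X),\rho(\bar Y)]=\rho(\overline{[X,Y]})=0$ because $[X,Y]\in\G^1$, so $\rho(\G/\G^1)$ is commutative; conversely $\G$ is recovered from $\rho$ together with the alternating map $\Lambda^2(\G/\G^1)\to\G^1$ induced by the bracket, i.e. from a single vector $v$ (up to scale, as $\dim\Lambda^2(\G/\G^1)=1$), the only further constraint being that $\mathrm{im}\,\rho$ commute; and $[\G,\G]=\G^1$ says precisely that $\mathrm{span}(v)$ together with the images of $\rho$ spans $\G^1$.

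\emph{The MD condition forces a rank-one action.} Recall $\dim\Omega_F=\operatorname{rank}B_F$, where $B_F(X,Y)=\langle F,[X,Y]\rangle$. In a basis $\{X_1,X_2\}\cup\{X_3,X_4,X_5\}$ adapted to $\G=\langle X_1,X_2\rangle\oplus\G^1$ the skew matrix $B_F$ has block form
\[
B_F=\begin{pmatrix} J & M\\ -M^{T} & 0\end{pmatrix},
\]
where the $2\times2$ block $J$ records $\langle F,v\rangle$ and the $2\times3$ block $M$ has rows $Y\mapsto\langle F,\mathrm{ad}_{X_1}Y\rangle$ and $Y\mapsto\langle F,\mathrm{ad}_{X_2}Y\rangle$, $Y\in\G^1$. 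Computing $\operatorname{rank}B_F$ as $F$ ranges over $\G^*$, I would show that a genuinely $2$-dimensional image of $\rho$ produces functionals $F,F'$ with $0\neq\operatorname{rank}B_F\neq\operatorname{rank}B_{F'}\neq0$, contradicting the defining property of an MD-algebra; while $\mathrm{im}\,\rho=0$ would make $[\G,\G]\subseteq\mathrm{span}(v)$ at most $1$-dimensional. So $\dim\mathrm{im}\,\rho=1$: choose $X_1$ spanning $\ker\rho$ (so $\mathrm{ad}_{X_1}|_{\G^1}=0$) and $X_2$ with $A:=\mathrm{ad}_{X_2}|_{\G^1}\neq0$; one checks $v:=[X_1,X_2]\neq0$, and rescaling a basis vector of $\G^1$ gives $[X_1,X_2]=X_3$ with $\G^1=\mathrm{span}(X_3)+\mathrm{im}\,A$, hence $\operatorname{rank}A\in\{2,3\}$.

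\emph{Classification of $A$.} It remains to classify the pair $(A,X_3)$ modulo the base changes that preserve this normal form: $P\in GL(\G^1)$ acting by $A\mapsto PAP^{-1}$, $X_3\mapsto PX_3$; $X_2\mapsto tX_2$ ($t\neq0$), rescaling $A\mapsto tA$; and $X_1\mapsto X_1+Y$ ($Y\in\G^1$), which replaces $X_3$ by $X_3-AY$, so $X_3$ only matters modulo $\mathrm{im}\,A$. Thus one brings $A$ to real Jordan form up to a positive scalar; for a real $3\times3$ matrix of rank $\ge2$ the normalized shapes are: three distinct real eigenvalues (family $1$); diagonalizable with a double and a simple real eigenvalue (families $2$, $3$); diagonalizable with a triple eigenvalue (family $4$); one nontrivial $2\times2$ Jordan block (families $5$, $6$); one $3\times3$ Jordan block (family $7$); and a complex-conjugate pair together with a real eigenvalue, the pair normalized to a rotation block (family $8$). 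In each shape one fixes the class of $X_3$ using the shift by $\mathrm{im}\,A$ and (when $\operatorname{rank}A=2$) the requirement $X_3\notin\mathrm{im}\,A$, obtaining the displayed $\mathrm{ad}_{X_2}$; a final check of the MD property and of indecomposability discards the degenerate parameter values, which is exactly what yields the stated exclusions ($\lambda_i\in\R\setminus\{0,1\}$ with $\lambda_1\neq\lambda_2$ in family $1$, $\lambda\in\R\setminus\{0,1\}$ in families $2$ and $6$, $\lambda\in\R\setminus\{1\}$ in families $3$ and $5$, $\lambda\in\R\setminus\{0\}$ and $\varphi\in(0,\pi)$ in family $8$).

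\emph{Uniqueness.} The $GL(\G^1)$-conjugacy class of the line $\R\cdot\mathrm{ad}_{X_2}|_{\G^1}$ in $\mathrm{End}(\G^1)$ is an isomorphism invariant (it is the image of the intrinsic map $X\mapsto\mathrm{ad}_X|_{\G^1}$), so the real-Jordan type up to scalar already separates the eight families, while inside each family the normalized eigenvalues, resp. the angle $\varphi$, are complete invariants; and each listed algebra is indecomposable because an abelian direct summand would be central and thus contained in $\langle X_3\rangle=\langle[X_1,X_2]\rangle\subseteq[\G,\G]$, impossible for a direct summand. The hard part will be the rank analysis of $B_F$ in the second step — ruling out a two-dimensional $\mathrm{ad}$-action needs a careful case split on $\operatorname{rank}(\mathrm{ad}_{X_1})$, $\operatorname{rank}(\mathrm{ad}_{X_2})$ and the mutual positions of their images and of $v$ — together with the lengthy but routine bookkeeping of which parameter values coincide under the residual symmetries and which are removed by the MD condition.
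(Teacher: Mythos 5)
The paper itself contains no proof of this proposition: it is quoted verbatim from the classification paper \cite{Vu-Sh}, so your proposal can only be measured against the general strategy of that classification, which it does mirror in outline (reduce the algebra to a commuting pair $\rho\colon\mathcal{G}/\mathcal{G}^1\to\mathrm{End}(\mathcal{G}^1)$ plus the marked vector $[X_1,X_2]$, then normalize by real Jordan form up to a scalar and a shift of the marked vector by $\mathrm{im}\,\mathrm{ad}_{X_2}$).

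As a proof, however, the proposal has genuine gaps. The two steps carrying all the weight are only announced, never carried out: (i) that the MD condition forces $\dim\mathrm{im}\,\rho=1$ (so that one may take $\mathrm{ad}_{X_1}|_{\mathcal{G}^1}=0$), which requires the full rank analysis of $B_F$ over all $F\in\mathcal{G}^*$ that you yourself flag as ``the hard part''; and (ii) the family-by-family verification that the displayed parameter restrictions are exactly those surviving the MD condition and the residual symmetries. Both appear only as ``I would show\dots'' and ``a final check\dots discards\dots'', so nothing in the crucial middle of the argument is actually established. Moreover, the one argument you do give in full --- indecomposability --- is wrong as stated: a central element of such an algebra need not lie in $\langle X_3\rangle$. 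Concretely, with $\mathrm{ad}_{X_1}|_{\mathcal{G}^1}=0$, $[X_1,X_2]=X_3$ and $A:=\mathrm{ad}_{X_2}|_{\mathcal{G}^1}$ invertible, the vector $X_1+A^{-1}X_3$ is central (it commutes with $X_2$ since $[X_1+A^{-1}X_3,X_2]=X_3-A(A^{-1}X_3)=0$, and with $\mathcal{G}^1$ since $\mathcal{G}^1$ is abelian) and it does not even lie in $\mathcal{G}^1$; so the claim ``an abelian direct summand would be central and thus contained in $\langle X_3\rangle\subseteq[\mathcal{G},\mathcal{G}]$'' fails, and the interplay between the normalization $[X_1,X_2]=X_3$ and (in)decomposability --- precisely when the marked vector can be absorbed into $\mathrm{im}\,A$ by replacing $X_1$ with $X_1+Y$, $Y\in\mathcal{G}^1$ --- is the delicate point your sketch glosses over rather than resolves. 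Until (i), (ii) and a correct treatment of this last point are supplied, the proposal is a plausible plan, not a proof.
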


\begin{rem}
		Because each real Lie algebra $\G$ defines only one connected and simply connected Lie group $G$ such that ${\rm Lie}(G)= \G$. Therefore, we obtain a collection of 8 families of connected and simply connected MD(5,3C)-groups corresponding to the indecomposable MD(5,3C)-algebras given in Proposition \ref{MD(5,3C)-algebras}. For convenience, each MD(5,3C)-group from this collection is also denoted by the same indices as corresponding MD(5,3C)-algebra. For example, $G_{5,3,1(\lambda_1,\lambda_2)}$ is the indecomposable connected and simply connected MD(5,3C)-group corresponding to $\G_{5,3,1(\lambda_1,\lambda_2)}$.
	\end{rem}

\section{The class of MD(5,3C)-foliations associated to the considered MD(5,3C)-groups}

	In this section, we introduce foliations formed by the generic $K$-orbits of the connected and simply connected MD(5,3C)-groups. These foliations are called, for brevity, MD(5,3C)-foliations. First, we recall the geometric picture of the co-adjoint orbits of all MD(5,3C)-groups in \cite{Vu-Th}.
	
	Let $G$ be one of MD(5,3C)-groups, i.e. $G$ is one of connected and simply connected MD(,3C)5-groups corresponding to the MD(5,3C)-algebras listed in Proposition \ref{MD(5,3C)-algebras}. Denote by ${\mathcal{G}}^{*}$ the dual space of the Lie algebra $\mathcal{G} = {\rm Lie} (G)$ of $G$. Clearly,  ${\mathcal{G}}^{*}$ can be identified with ${\R}^{5}$ by fixing in it the dual basis $\left\lbrace X_{1}^{*}, X_{2}^{*}, X_{3}^{*}, X_{4}^{*}, X_{5}^{*} \right\rbrace$ of the basis $(\nobreak X_{1}, X_{2}, X_{3}, X_{4}, X_{5} \nobreak)$ of $\mathcal{G}$. Let
$F = {\alpha}X_{1}^{*} + {\beta}X_{2}^{*} + {\gamma}X_{3}^{*} + {\delta}X_{4}^{*} + {\sigma}X_{5}^{*} \equiv ({\alpha}, {\beta}, {\gamma}, {\delta}, {\sigma})$ be an arbitrary element of ${\mathcal{G}}^{*} \equiv {\R}^{5}$. The notation ${\Omega}_{F}$ will be used to denote the $K$-orbit of $G$ which contains $F$. The geometric picture of the $K$-orbits of $G$ is given by the following proposition.

\begin{prop}[{\cite{Vu-Th}}]
	For each MD(5,3C)-group $G$, the $K$-orbit $\Omega_F$ of $G$ is described as follows.
\begin{description}
        \item [1.] Let $G$ is one of $G_{5,3,1({\lambda}_{1}, {\lambda}_{2})}$, $G_{5,3,2(\lambda)}$, $G_{5,3,3(\lambda)}$, $G_{5,3,4}$, $G_{5,3,5(\lambda)}$, $G_{5,3,6(\lambda)}$, $G_{5,3,7}$.
        \begin{description}
            \item[1.1.] If $\beta=\gamma=\delta=\sigma=0$ then $\Omega_F=\{F\}$ (the 0-dimensional orbit).
            \item[1.2.] If $\beta^2+\gamma^2+\delta^2+\sigma^2\neq0$ then $\Omega_F$ is the orbit of dimension 2 and it is one of the following:
        \end{description}
\begin{itemize}
    \item $\left\lbrace \left(\alpha+\frac{1-e^{\lambda_1a}}{\lambda_1}\gamma; y; e^{\lambda_1a}\gamma; e^{\lambda_2a}\delta; e^a\sigma\right): y,a \in \R \right\rbrace$ when $G=G_{5,3,1({\lambda}_{1},{\lambda}_{2})}$.
    \item $\left\lbrace \left(\alpha+\left(1-e^a\right)\gamma; y; e^a\gamma; e^a\delta; e^{\lambda a}\sigma\right): y,a \in \R \right\rbrace$ when $G=G_{5,3,2(\lambda)}$.
    \item $\left\lbrace \left( \alpha + \frac{1 - e^{\lambda a}}{\lambda}\gamma; y; e^{\lambda a}\gamma; e^a\delta; e^a\sigma\right): y, a \in \R \right\rbrace$ when $G=G_{5,3,3(\lambda)}$.
    \item $\left\lbrace \left(\alpha+\left(1-e^a\right)\gamma; y; e^a\gamma; e^a\delta; e^a\sigma\right): y, a \in \R \right\rbrace$ when $G=G_{5,3,4}$.
    \item $\left\lbrace \left(\alpha+\frac{1-e^{\lambda a}}{\lambda}\gamma; y; e^{\lambda a}\gamma; e^a\delta; ae^a\delta+e^a\sigma\right): y, a \in \R \right\rbrace$ when $G=G_{5,3,5(\lambda)}$.
    \item $\left\lbrace \left(\alpha+\left(1-e^a\right)\gamma; y; e^a\gamma; ae^a\gamma+e^a\delta; e^{\lambda a}\sigma\right): y, a \in \R \right\rbrace$ when $G=G_{5,3,6(\lambda)}$.
    \item $\left\lbrace \left(\alpha+\left(1-e^a\right)\gamma; y; e^a\gamma; ae^a\gamma+e^a\delta; \frac{a^2e^a}{2}\gamma+ae^a\delta+e^a \sigma\right): y, a \in \R \right\rbrace$ when $G=G_{5,3,7}$.
\end{itemize}
   \item [3.] Let $G$ is $G_{5,3,8(\lambda, \varphi)}$. Let us identify $\G_{5,3,8(\lambda,\varphi)}^*$ with
$\R^2 \times \mathbb{C} \times \R$ and $F$ with $(\alpha,\beta,\gamma+i\delta,\sigma)$. Then we have:
\begin{description}
    \item[3.1.] If $\gamma+i\delta=\sigma=0$ then $\Omega_F=\{F\}$ (the 0-dimensional orbit).
    \item[3.2.] If $\beta^2+\gamma^2 \neq 0 \neq \sigma$ then $\Omega_F$ is the orbit of dimension 2 as follow: $$\Omega_F= \left\lbrace \left(x; y; \left(\gamma+i\delta\right)e^{ae^{-i\varphi}}; e^{\lambda a}\sigma\right): y, a \in \R \right\rbrace.$$
\end{description}
\end{description} 
\end{prop}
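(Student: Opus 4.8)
The plan is to apply Kirillov's orbit method directly. Since $G$ is connected, the $K$-orbit $\Omega_F$ is the image of the smooth map $G\to\G^*$, $g\mapsto K_{(g)}F$, so it suffices to compute this co-adjoint action explicitly and then describe its image. By definition $\langle K_{(g)}F,X\rangle=\langle F,\mathrm{Ad}(g^{-1})X\rangle$, so the whole problem reduces to the adjoint representation, for which $\mathrm{Ad}(\exp Z)=e^{\mathrm{ad}_Z}$. I would exploit the common structure of all eight algebras listed in Proposition \ref{MD(5,3C)-algebras}: $\G^1=\R X_3\oplus\R X_4\oplus\R X_5$ is an abelian ideal, $\G/\G^1\cong\R^2$ is abelian, $\mathrm{ad}_{X_1}$ annihilates $\G^1$ and sends $X_2\mapsto X_3$, and $\mathrm{ad}_{X_2}$ acts on $\G^1$ by the prescribed $3\times3$ matrix and sends $X_1\mapsto -X_3$. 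Writing a general $g\in G$ as a product of the one-parameter subgroups $\exp(yX_1)$, $\exp(aX_2)$ and an element of the abelian subgroup $\exp(\G^1)$, one computes $K_{(g)}F$ block by block.

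Two observations organize the computation. First, conjugation by $\exp(yX_1)$ or by any element of $\exp(\G^1)$ is unipotent of order two, and on the dual side it only translates the $X_2^*$-coordinate of $F$; hence, as soon as $F$ is not a fixed point, that coordinate becomes a free real parameter --- this is the ``$y$'' occurring in every orbit formula. Second, $\mathrm{Ad}(\exp(aX_2))$ restricted to $\G^1$ is the matrix exponential of the displayed $3\times3$ matrix, and applied to $X_1$ it produces an extra term such as $\frac{1-e^{\lambda a}}{\lambda}X_3$ (or $(1-e^{a})X_3$, and so on) coming from summing the series $\sum_{k\ge1}\frac{(-a)^k}{k!}\mathrm{ad}_{X_2}^{\,k-1}$ applied to $X_3$. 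Thus the exponential and polynomial shapes in the statement are exactly the entries of $e^{a\,\mathrm{ad}_{X_2}}$: pure exponentials $e^{\lambda a}$, $e^{a}$ in the diagonalizable cases 1--4; polynomial-times-exponential terms $ae^{a}$, $\frac{a^2}{2}e^{a}$ in the Jordan-block cases 5, 6, 7; and, after identifying $\G_{5,3,8(\lambda,\varphi)}^*$ with $\R^2\times\mathbb{C}\times\R$, the factor $e^{ae^{-i\varphi}}$ produced by exponentiating the rotation block $\left(\begin{smallmatrix}\cos\varphi&-\sin\varphi\\\sin\varphi&\cos\varphi\end{smallmatrix}\right)$ in case 8. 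Assembling the blocks yields the claimed parametrization of $\Omega_F$ by $(y,a)\in\R^2$.

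It remains to determine the dimension of $\Omega_F$ and the dichotomy between point orbits and $2$-dimensional orbits. For this I would compute the Kirillov bilinear form $B_F(X,Y)=\langle F,[X,Y]\rangle$ on $\G$ (equivalently, the stabilizer $\G_F=\{X:B_F(X,\cdot)=0\}$): a short calculation shows $B_F\equiv0$ precisely in the degenerate case listed in each item, so $\Omega_F=\{F\}$ there, while otherwise $B_F$ has rank exactly $2$, hence $\dim\Omega_F=2$. One then checks that on the generic stratum the explicit $(y,a)$-parametrization has rank $2$ and sweeps out exactly the stated set (injectivity modulo $\G_F$, together with surjectivity of the maps $a\mapsto e^{\lambda a}$, $a\mapsto ae^{a}$, etc. onto the relevant curve), which completes the description.

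The main obstacle is organizational rather than conceptual: this is an eight-case computation with sub-cases, and the genuinely delicate points are (i) the Jordan-block algebras $\G_{5,3,5}$, $\G_{5,3,6}$, $\G_{5,3,7}$, where $e^{a\,\mathrm{ad}_{X_2}}$ has entries $ae^{a}$, $\frac{a^2}{2}e^{a}$ and the dualization must be done carefully so the cross-terms land in the correct coordinates; (ii) case 8, where one must first pass to the complex picture $\R^2\times\mathbb{C}\times\R$ in order for the rotation to appear as the single clean factor $e^{ae^{-i\varphi}}$; and (iii) phrasing the genericity hypothesis in each item so that its two alternatives are genuinely complementary. Everything else is bookkeeping that the uniform structure above reduces to a handful of $3\times3$ matrix exponentials.
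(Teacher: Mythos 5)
The paper offers no proof of this proposition: it is quoted from \cite{Vu-Th} and used as input. Your outline is precisely the computation that source carries out --- write $g$ as a product of $\exp(yX_1)$, $\exp(aX_2)$ and an element of $\exp(\G^1)$, read off the coadjoint action from the blocks of $e^{a\,\mathrm{ad}_{X_2}}$ (including the extra $\frac{1-e^{\lambda a}}{\lambda}$-type term on $X_1$, the $ae^{a}$, $\frac{a^2}{2}e^{a}$ entries in the Jordan cases, and the factor $e^{ae^{-i\varphi}}$ after passing to $\R^2\times\mathbb{C}\times\R$), and settle the point-orbit versus $2$-dimensional dichotomy with the Kirillov form $B_F$ --- so it is essentially the same approach and it is sound. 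One caveat: the dichotomy $B_F$ actually gives is ``$\gamma=\delta=\sigma=0$ versus $\gamma^2+\delta^2+\sigma^2\neq0$'', independently of $\beta$, so the hypotheses printed in 1.1/1.2 (and in 3.2) are not literally complementary; your delicate point (iii) thus flags a defect in the statement as transcribed rather than a gap in your argument.
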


	In the introduction we have emphasized that, for every connected and simply connected MD-group, the family of maximal dimensional $K$-orbits forms a measurable foliation in terms of Connes (\cite{Co82}). Namely, we have the following proposition.

\begin{prop}[{\cite{Vu-Th}}]\label{MD(5,3C)-foliations}
	Let $G$ be one of the connected and simply connected MD(5,3C)-groups, $\mathcal{F}_{G}$ be the family of all its $K$-orbits of dimension two and $V_G: = \bigcup \{ \Omega / \Omega \in \mathcal{F}_{G}\}$. Then $\left(V_{G},\F_G \right)$ is a measurable foliation in the term of Connes. We call it \emph{MD(5,3C)-foliation associated to MD(5,3C)-group $G$}.
\end{prop}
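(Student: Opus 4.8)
The plan is to check, family by family, the three ingredients of Connes' notion of a measurable foliation (\cite{Co82}): that $V_G$ is a manifold, that it is partitioned into a (measurable) family of submanifolds all of one fixed dimension, and that it carries a natural holonomy–invariant transverse measure class. Since the members of $\F_G$ are the maximal–dimensional orbits of the smooth $K$-action of $G$ and that dimension is constant ($=2$) on $V_G$ by the preceding description of the $K$-orbits, the last two ingredients will be essentially automatic, so the real work is to pin down $V_G$ explicitly and observe that it is an open submanifold of $\G^*\equiv\R^5$.

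First I would read off $V_G$ from the geometric picture of the $K$-orbits. For $G$ among $G_{5,3,1(\lambda_1,\lambda_2)},G_{5,3,2(\lambda)},\dots,G_{5,3,7}$, that picture gives the dichotomy: $\Omega_F$ is the single point $\{F\}$ when $\beta=\gamma=\delta=\sigma=0$, and is a $2$-dimensional orbit otherwise. Hence
$$V_G=\bigl\{\,F=(\alpha,\beta,\gamma,\delta,\sigma)\in\G^*\equiv\R^5:\ \beta^2+\gamma^2+\delta^2+\sigma^2\neq0\,\bigr\}=\R^5\setminus(\R.X_1^*),$$
which is an open, connected and dense submanifold of $\R^5$. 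For $G=G_{5,3,8(\lambda,\varphi)}$ the same Proposition, written in the coordinates $(\alpha,\beta,\gamma+i\delta,\sigma)$ on $\R^2\times\mathbb{C}\times\R$, again exhibits $V_G$ as the complement of the (at most one–dimensional) fixed–point locus, hence once more an open submanifold of $\R^5$.

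Next I would produce the foliation structure. The co-adjoint action of $G$ restricts to a smooth action on the open set $V_G$ whose orbits are exactly the members of $\F_G$, all of dimension $2$. A smooth Lie group action with constant orbit dimension yields a regular foliation: let $E\subset TV_G$ be the subbundle spanned pointwise by the fundamental vector fields $\xi_X$, $X\in\G$, of the $K$-action; by the orbit description it has rank $2$ at every point of $V_G$, and it is involutive because the $\xi_X$ close under the Lie bracket (up to sign, $[\xi_X,\xi_Y]=\pm\xi_{[X,Y]}$), so by the Frobenius theorem $E$ is integrable and its maximal integral submanifolds are precisely the two–dimensional $K$-orbits. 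In practice, from the explicit parametrisations of the orbits one may simply take the two vector fields generating the flows in the parameters $y$ and $a$, verify that they are everywhere linearly independent on $V_G$, and compute their bracket; this is a short routine computation repeated across the eight cases. Finally, the restriction of Lebesgue measure on $\R^5$ to $V_G$, disintegrated along the leaves, supplies a holonomy–invariant transverse measure, so $(V_G,\F_G)$ is a measurable foliation in the sense of \cite{Co82}.

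The only point that really needs care is the first one: confirming from the orbit classification that the union of the two–dimensional orbits is exactly the complement of the fixed–point set — that there is no intermediate orbit dimension and that $V_G$ is therefore genuinely open in $\R^5$. Once that is settled, the foliation and measurability claims are formal, the argument following the same scheme as the MD4-case in \cite{Vu3} and the MD(5,4)-case in \cite{Vu-Ho-Tu}; the family $G_{5,3,8(\lambda,\varphi)}$ is the only one requiring slightly different bookkeeping because of its complex coordinate $\gamma+i\delta$.
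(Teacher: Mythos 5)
The paper itself does not prove this proposition (it is quoted from \cite{Vu-Th}), so I can only judge your sketch on its own terms; its overall scheme --- orbits of constant dimension $2$, fundamental vector fields of the $K$-action spanning an involutive rank-$2$ distribution, Frobenius, and a transverse measure coming from Lebesgue measure --- is the standard and correct one. However, the step you yourself flag as ``the only point that really needs care'' is carried out incorrectly: $V_G$ is \emph{not} $\R^5\setminus(\R.X_1^*)$. For all eight families every bracket of $\G$ lies in the derived ideal $\G^1={\rm span}(X_3,X_4,X_5)$, so if $F=(\alpha,\beta,\gamma,\delta,\sigma)$ vanishes on $\G^1$, i.e. $\gamma=\delta=\sigma=0$, then the Kirillov form $B_F(X,Y)=\langle F,[X,Y]\rangle$ is identically zero and $\Omega_F=\{F\}$ regardless of $\beta$. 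Thus the points with $\beta\neq0$, $\gamma=\delta=\sigma=0$ are fixed points, not points of two-dimensional orbits; the condition ``$\beta^2+\gamma^2+\delta^2+\sigma^2\neq0$'' in the quoted orbit picture is a misprint (note that for $\gamma=\delta=\sigma=0$ the displayed parametrization degenerates to a line, which cannot be a coadjoint orbit since such orbits are even-dimensional). The correct foliated manifold is $V_G=\{(\alpha,\beta,\gamma,\delta,\sigma):\gamma^2+\delta^2+\sigma^2\neq0\}\cong\R^2\times(\R^3\setminus\{0\})$, the complement of a $2$-plane, which is precisely the $V\cong\R^2\times(\R^3)^*$ used in Remark 2.3, in the proof of Theorem \ref{MD(5,3C)-foliations' classification}, and in Section 3; likewise for $G_{5,3,8(\lambda,\varphi)}$ the fixed-point locus $\{\gamma+i\delta=\sigma=0\}$ is a $2$-plane, not ``at most one-dimensional''.

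This is not merely cosmetic for your argument: on the larger set you propose, the distribution spanned by the fundamental vector fields has rank $0$ at the points $(\alpha,\beta,0,0,0)$ with $\beta\neq0$, so the constant-rank/Frobenius step fails there, and those points do not belong to the union of the two-dimensional orbits in any case. Once $V_G$ is identified correctly, your scheme does go through: $V_G$ is open (complement of a closed plane), the fundamental vector fields span an involutive subbundle of constant rank $2$ whose maximal integral manifolds are exactly the $K$-orbits, and one obtains a measurable foliation in Connes' sense. One further small point: the holonomy-invariance of the transverse measure obtained by disintegrating Lebesgue measure is asserted rather than verified; a cleaner route is to note that the foliation is defined by the explicit vector fields (or, for type $\mathscr{F}_2$, by the $\R^2$-action $\rho$ of Theorem \ref{MD(5,3C)-foliations' classification}) and exhibit an invariant transverse volume element, as is done in the MD4 and MD(5,4) cases you cite.
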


\begin{rem}
	According to Proposition \ref{MD(5,3C)-algebras} and Proposition \ref{MD(5,3C)-foliations}, we obtain 8 families of MD(5,3C)-foliations associated to 8 families of MD(5,3C)-groups. Note that, for each MD(5,3C)-group $G$, the foliated manifold $V_G$ is an open submanifold of the dual space $\G^* \cong \R^5$ of the Lie algebra $\G$ corresponding to $G$. Furthermore, for all MD(5,3C)-groups of the forms $G_{5,3,\ldots}$, the manifolds $V_G$ are diffeomorphic to each other. So, for simplicity of notation, we shall write $(V,\F_{3,\ldots})$ instead of $(V_{G_{5,3,\ldots}},\F_{G_{5,3,\ldots}})$. 
\end{rem}

	Now we give the topological classification of 8 families of MD(5,3C)-foliations in the following theorem.

\begin{thm}[{{\bf The classification of MD(5,3C)-foliations}}]\label{MD(5,3C)-foliations' classification}
\begin{enumerate-1}
    \item[\bf 1.] There exist exactly 2 topological types of 8 families of considered MD(5,3C)-foliations as follows:
    \begin{enumerate}
        \item[\bf 1.1.] $\left\lbrace \left(V,\mathcal{F}_{1 \left(\lambda_1, \lambda_2 \right)}\right), \left(V,\mathcal{F}_{2 \left(\lambda \right)}\right), \left(V,\mathcal{F}_{3 \left(\lambda \right)}\right), \left(V,\mathcal{F}_{4}\right), \left(V,\mathcal{F}_{5 \left(\lambda \right)}\right), \left(V,\mathcal{F}_{6 \left(\lambda \right)}\right), \left(V,\mathcal{F}_{7}\right) \right\rbrace$.
        \item[\bf 1.2.] $\left\lbrace \left(V,\mathcal{F}_{8 \left(\lambda, \varphi \right)}\right) \right\rbrace$.
    \end{enumerate}
    We denote these types by $\mathscr{F}_1$ and $\mathscr{F}_2$, respectively.
    \item[\bf 2.] Furthermore, we have:
    \begin{enumerate}
        \item[\bf 2.1.] The MD(5,3C)-foliations of the type $\mathscr{F}_1$ are trivial fibration with connected fibres on $\R \times S^2$.
        \item[\bf 2.2.] The MD(5,3C)-foliations of the type $\mathscr{F}_2$ are non-trivial but they can be given by suitable actions of $\R^2$ on the foliated manifolds $V \cong \R^2 \times \left(\mathbb{C} \times \R\right)^*$.
    \end{enumerate}
\end{enumerate-1} 
\end{thm}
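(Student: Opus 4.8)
\emph{Overall plan.} Everything is read off from the explicit $K$-orbit picture recalled above. The seven families $(V,\F_{1(\lambda_1,\lambda_2)}),\dots,(V,\F_7)$ are treated together and $(V,\F_{8(\lambda,\varphi)})$ apart, the geometry being qualitatively different according to whether $ad_{X_2}$ has only real eigenvalues or a complex pair. For each of the first seven groups the $K$-orbit picture presents the leaf through $F=(\alpha,\beta,\gamma,\delta,\sigma)$ as a $2$-parameter family in which the $X_2^{*}$-coordinate $\beta$ is a free parameter $y\in\R$, the triple $(\gamma,\delta,\sigma)$ is transported by the linear one-parameter group $a\mapsto e^{aM}$, where $M$ is the relevant normal form of $ad_{X_2}$ from Proposition~\ref{MD(5,3C)-algebras}, and the $X_1^{*}$-coordinate is an explicit affine function of $a$ coupled only to $\gamma$. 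Hence all seven foliated manifolds coincide with one fixed open set $V\subset\G^{*}\cong\R^{5}$, diffeomorphic to $\R^{2}\times(\R^{3}\setminus\{0\})$; and after the substitution that absorbs the coupling term in the $X_1^{*}$-coordinate, the leaf of $F$ becomes the product of the line $\R\cdot\partial_\beta$ with the orbit of $a\mapsto e^{aM}$ through $(\gamma,\delta,\sigma)$.

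\emph{The type $\mathscr{F}_1$ (assertions 1.1 and 2.1).} For each $i\in\{1,\dots,7\}$ the plan is to write down, from the formulas above, a smooth surjective submersion $p_i\colon V\to\R\times S^{2}$ which is constant on the leaves of $\F_i$ and whose fibres are exactly those leaves: its first component is the leaf-invariant combination of $\alpha$ and $\gamma$ isolated in the previous step, and its second component is the leaf-invariant normalisation of $(\gamma,\delta,\sigma)$ to the unit sphere (for $\F_4$, where $e^{aM}=e^{a}I$, this is literally $F\mapsto\bigl(\alpha+\gamma,\,(\gamma,\delta,\sigma)/\lVert(\gamma,\delta,\sigma)\rVert\bigr)$, and the general $M$ is handled by the same device with the norm chosen so that each $e^{aM}$-orbit meets $S^{2}$ exactly once). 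Each fibre of $p_i$ is then diffeomorphic to $\R^{2}$, with coordinates $\beta$ and the $e^{aM}$-time, hence connected; and reading this time off the crossing point with $S^{2}$ yields a global section of $p_i$, that is a foliated diffeomorphism $V\cong(\R\times S^{2})\times\R^{2}$ onto the trivial fibration by $\R^{2}$-fibres. Since all seven foliations become foliated-diffeomorphic to this single model, they constitute one topological type $\mathscr{F}_1$, which together with the trivialisation gives assertions 1.1 and 2.1.

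\emph{The type $\mathscr{F}_2$ (assertions 1.2 and 2.2).} For $G_{5,3,8(\lambda,\varphi)}$, identify $\G^{*}$ with $\R^{2}\times\mathbb{C}\times\R$ and $V$ with $\R^{2}\times(\mathbb{C}\times\R)^{*}$ as in the $K$-orbit picture. Along each leaf $\beta$ is again a free parameter, while $(\gamma+i\delta,\sigma)$ is carried by the flow $a\mapsto\bigl((\gamma+i\delta)e^{ae^{-i\varphi}},\,e^{\lambda a}\sigma\bigr)$; I would present $\F_{8}$ as the orbit foliation of the smooth action of $\R^{2}=\{(y,a)\}$ generated by $\partial_\beta$ and the infinitesimal generator of this flow, two complete, commuting vector fields (the flow does not involve $\beta$) that span the tangent $2$-plane field of $\F_{8}$; this is the action announced in 2.2. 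What separates $\mathscr{F}_2$ from $\mathscr{F}_1$ is that, since $\varphi\in(0,\pi)$, the group $a\mapsto e^{ae^{-i\varphi}}$ genuinely rotates the $\mathbb{C}$-factor, so that the leaves spiral; I would use this — through the infinite winding of a leaf as it approaches the stratum of $0$-dimensional $K$-orbits, so that its closure meets that stratum and nearby leaves are not separated in $V/\F_{8}$ — to conclude that $(V,\F_{8})$ carries no trivial fibration structure, whence $\mathscr{F}_2\neq\mathscr{F}_1$. Finally, an explicit rescaling in the parameters $\lambda$ and $\varphi$ identifies all the $(V,\F_{8(\lambda,\varphi)})$ with one another, so $\mathscr{F}_2$ is a single type, which completes assertion 1.2.

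\emph{Main obstacle.} The technical heart is the type-$\mathscr{F}_1$ step: for each of the normal forms of $ad_{X_2}$, and for the admissible ranges of its parameters, one must verify that $a\mapsto e^{aM}$ admits $S^{2}$ as a global transversal on $\R^{3}\setminus\{0\}$ — which comes down to positive-definiteness of the symmetric part of its generator, and hence to the eigenvalue configuration of $ad_{X_2}$ — since only then do the leaf-invariant projection $p_i$ and its global trivialisation exist; the Jordan-block cases $\F_5,\F_6,\F_7$ require the same check. By comparison the $\R^{2}$-action and the non-separation argument for $\mathscr{F}_2$, and the identifications within each of the two types, are routine.
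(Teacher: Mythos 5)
Your route is genuinely different from the paper's. For assertion 1 the paper never touches leaf spaces: it writes down explicit leaf-preserving homeomorphisms $h_{1(\lambda_1,\lambda_2)},h_{2(\lambda)},h_{3(\lambda)},h_{5(\lambda)},h_{6(\lambda)},h_{7}$ of $V$ carrying each of the first seven foliations onto the single model $(V,\F_{4})$ (signed-power substitutions such as $z\mapsto \mathrm{sgn}(z)|z|^{1/\lambda}$, plus logarithmic corrections $t\mapsto t-z\ln|z|$, etc., for the Jordan-block families), and a homeomorphism $h_{8(\lambda,\varphi)}$ carrying $(V,\F_{8(\lambda,\varphi)})$ onto $(V,\F_{8(1,\pi/2)})$; assertion 2 is then proved only for the two models, via one fibration $p(x,y,r,\phi,\theta)=(x+r\cos\phi\sin\theta,\phi,\theta)$ for $\F_4$ in spherical coordinates and one explicit action $\rho$ of $\R^{2}$ for $\F_{8(1,\pi/2)}$. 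You instead propose a fibration $p_i\colon V\to\R\times S^{2}$ separately for each of the seven families (getting 1.1 and 2.1 simultaneously) and an $\R^{2}$-action separately for each $(\lambda,\varphi)$. Two remarks: your second generator for the action must include the drift in the $X_1^{*}$-coordinate (the orbit formula for $G_{5,3,8}$ suppresses it, but it is visible in the paper's $\rho$); and your non-separation argument that $\mathscr{F}_2\neq\mathscr{F}_1$ is a point the paper leaves implicit, so that part is a genuine addition.

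There is, however, a real gap at exactly the step you call the main obstacle, and it is not a verification that succeeds: on the stated parameter ranges the eigenvalues of $ad_{X_2}$ need not be positive ($\lambda_1,\lambda_2\in\R\setminus\{0,1\}$ in family 1; $\lambda\in\R\setminus\{0,1\}$ in families 2, 6; $\lambda\in\R\setminus\{1\}$ in families 3, 5). For mixed signs, say $M=\mathrm{diag}(-1,2,1)$, the flow $e^{aM}$ on $\R^{3}\setminus\{0\}$ has non-Hausdorff orbit space: the orbit through $(\epsilon,1,0)$ is $\{(\epsilon e^{-a},e^{2a},0)\}$, which contains both $(1,\epsilon^{2},0)$ and $(\epsilon,1,0)$, hence accumulates as $\epsilon\to 0^{+}$ on the $z$-axis orbit and on the $t$-axis orbit at once. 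So no hypersurface whatsoever (round sphere or adapted sphere) meets every orbit exactly once, and no submersion onto the Hausdorff base $\R\times S^{2}$ can have the leaves as fibres; your plan therefore cannot deliver 1.1 and 2.1 on the whole announced range. It does work when all eigenvalues are positive, provided you replace the round sphere by an adapted transversal (e.g. $\{|z|^{2/\lambda_1}+|t|^{2/\lambda_2}+s^{2}=1\}$) or prove monotonicity of a suitable radial function along orbits, handle the degenerate values $\lambda=0$ in families 3 and 5 separately (the paper does this for $\F_{3(0)}$), and use the log-corrected leaf invariants for $\F_5,\F_6,\F_7$ in the first component of $p_i$.

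For orientation: the paper is in the same boat, since $h_{1},h_{2},h_{3},h_{6},h_{8}$ use $\mathrm{sgn}(\cdot)|\cdot|^{1/\lambda}$, which is not continuous on $V$ when the relevant $\lambda$ is negative, so the written proof also really covers only the positive case; indeed, repeating the computation above on the leaves of $\F_{1(-1,2)}$ inside $V$ shows its leaf space is non-Hausdorff while that of $\F_4$ is $\R\times S^{2}$, so for mixed signs the equivalence asserted in 1.1 fails and the theorem has to be read with an implicit positivity restriction. The difference between the two strategies is that the paper's leaf-to-leaf maps do not need the leaf space to be Hausdorff, so for it the sign issue is a defect of the explicit formulas, whereas in your approach the existence of a Hausdorff leaf space fibred over $\R\times S^{2}$ is the very thing being used, and for mixed signs it simply does not exist.
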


\begin{proof}
		Let us recall that two foliations $(V, \F)$ and $(V', \F')$ are said to be \emph{topologically equivalent} or have \emph{same topological type} if there exist a homeomorphism $h: V \to V'$ which sends each leaves of $\F$ onto those of $\F'$. The map $h$ is called a \emph{topological equivalence} of considered foliations.
		 \begin{enumerate}
		 	\item[\bf 1.] We consider maps $h_{1\left( \lambda_1, \lambda_2 \right)}, h_{2 \left( \lambda \right)}, h_{3 \left( \lambda \right)}, h_{5 \left( \lambda \right)}, h_{6 \left( \lambda \right)}, h_7$ from $V \cong \R^2 \times \left(\R^3\right)^*$ to $V$ which are defined as follows:
				\[
						\begin{array}{l}
							h_{1\left( \lambda_1, \lambda_2 \right)} \left( x; y; z; t; s \right) =
	 									\left( \lambda_1 x + z - {\rm sgn}(z). {|z|}^{\frac{1}{\lambda_1}}; y; {\rm sgn}(z). {|z|}^{\frac{1}{\lambda_1}}; {\rm sgn}(t). {|t|}^{\frac{1}{\lambda_2}}; s \right) \\
	 						h_{2 \left( \lambda \right)} \left( x; y; z; t; s \right)= \left( x; y; z; t; {\rm sgn}(s).{|s|}^{\frac{1}{\lambda}} \right) \\
	 						
	 						h_{3\left( \lambda \right)} \left( x; y; z; t; s \right) := \begin{array}{l l}
	 								\left( \lambda x + z - {\rm sgn}(z). {|z|}^{\frac{1}{\lambda}}; y; {\rm sgn}(z). {|z|}^{\frac{1}{\lambda}}; t; s \right)
	 							& \left( \lambda \neq 0 \right)
	 						\end{array} \\
	 						h_{5 \left( \lambda \right)} \left( x; y; z; t; s \right) :=
	 							\left\lbrace
	 								\begin{array}{l l}
										\left(x; y; z; t; s - t \ln|t| \right) & \left( t \ne 0 \right) \\
										\left( x; y; z; 0; s \right) & \left( t = 0 \right)
									\end{array}
								\right. \\
								h_{6 \left( \lambda \right)} \left( x; y; z; t; s \right) :=
	 							\left\lbrace
	 								\begin{array}{l l}
	 									\left( x; y; z; t - z \ln|z|; {\rm sgn}(s). {|s|}^{\frac{1}{\lambda}} \right)
	 										& \left( z \neq 0 \right) \\
	 									\left( x; y; 0; t; {\rm sgn}(s). {|s|}^{\frac{1}{\lambda}} \right)
	 										& \left( z=0 \right) \\
	 								\end{array}
	 							\right. \\
	 							h_7 \left( x; y; z; t; s \right) := \left( x; y; z, \tilde{t}, \tilde{s} \right),
						\end{array}
					\]
				where:
					\[
	 					\begin{array}{l}
	 						\tilde{t} := \left\lbrace
	 									\begin{array}{l l}
	 										0 & \left( z = t = 0 \vee  z \neq 0, t = z \ln|z| \right) \\
	 										t & \left( z = 0; t \neq 0 \right) \\
	 										t - z \ln|z| & \left( z \neq 0, t \neq z \ln|z| \right)
	 									\end{array} \right. \\
	 						\tilde{s} := \left\lbrace
	 									\begin{array}{l l}
	 										s & \left( z = t = 0 \right) \\
	 										s - t \ln|t| & \left( z = 0; t \neq 0 \right) \\
	 										s - \frac{1}{2} t \ln|z| & \left( z \neq 0, t = z \ln|z| \right) \\
	 										s - \frac{1}{2} t \ln|z| - \frac{1}{2} \left( t - z \ln|z| \right). \ln|t - z \ln|z|| & \left( z \neq 0, t \neq z \ln|z| \right)
	 									\end{array} \right.
	 						\end{array}	
	 					\]
					By some direct caculations, we have:
					\begin{itemize}
						\item $h_{1\left( \lambda_1, \lambda_2 \right)}$, $h_{2 \left( \lambda \right)}$, $h_{5 \left( \lambda \right)}$, $h_{6 \left( \lambda \right)}$, $h_7$ are homeomorphisms send leaves of $\left(V,\mathcal{F}_{1 \left(\lambda_1, \lambda_2 \right)}\right)$, $\left(V,\mathcal{F}_{2 \left(\lambda \right)}\right)$, $\left(V, \mathcal{F}_{5 \left(\lambda \right)}\right)$, $\left(V, \mathcal{F}_{6 \left(\lambda \right)}\right)$, $\left(V,\mathcal{F}_7\right)$ onto leaves of $\left(V, \mathcal{F}_{4}\right)$.
						\item If $\lambda \neq 0$ then $h_{3\left( \lambda \right)}$ is an homeomorphism sends leaves of $\left(V, \mathcal{F}_{3 \left(\lambda \right)}\right)$ onto leaves of $\left(V, \mathcal{F}_{4}\right)$. If $\lambda = 0$, by the geometric picture of $K$-orbits, we see that all the maximal dimension $K$-orbits of $\left(V, \mathcal{F}_{3 \left(0 \right)}\right)$ and $\left(V, \mathcal{F}_{4}\right)$ are half-planes; so, they are obviously homeomorphis.
					\end{itemize}
			So, the foliations $\left(V,\mathcal{F}_{1 \left(\lambda_1, \lambda_2 \right)}\right)$, $\left(V,\mathcal{F}_{2 \left(\lambda \right)}\right)$, $\left(V,\mathcal{F}_{3 \left(\lambda \right)}\right)$, $\left(V,\mathcal{F}_{4}\right)$, $\left(V,\mathcal{F}_{5 \left(\lambda \right)}\right)$, $\left(V,\mathcal{F}_{6 \left(\lambda \right)}\right)$, $\left(V,\mathcal{F}_{7}\right)$ are topologically equivelant. This type is denoted by $\mathscr{F}_1$.
			
		Now, by direct calculations, we see that $h_{8 \left( \lambda, \varphi \right)}: V \cong \R^2 \times (\mathbb{C} \times \R)^*  \to V$ defined by:
			\[
	 		h_{8 \left( \lambda, \varphi \right)} \left( x; y; re^{i\theta}; s \right) = \left( \tilde{x}; y; \tilde{r}e^{i\tilde{\theta}}; {\rm sgn} (s).{|s|}^{\frac{1}{\lambda}} \right)
	 		\]
	where:
	 	\[
			\begin{array}{l}
				\tilde{x} :=
							\begin{cases}
								x + r \cos \left(\theta + \varphi \right) + {\rm Im} \left[ e^{\left(\ln r + i\theta \right) \left( -ie^{i\varphi} \right)} \right] & \left(r \neq 0\right) \\
								x & \left(r = 0\right)
							\end{cases} \\
				\tilde{r}e^{i\tilde{\theta}} :=
														\begin{cases}
															e^{\left(\ln r + i\theta \right) \left( -ie^{i\varphi} \right)} & \left(r \neq 0\right) \\
															0 & \left(r = 0\right)
														\end{cases}
			\end{array}
		\]
			is a homeomorphism which sends leaves of $\left(V,\mathcal{F}_{8 \left( \lambda, \varphi \right)} \right)$ onto leaves of $\left(V,\mathcal{F}_{8 \left( 1, \frac{\pi}{2} \right)}\right)$. So, the foliations $\left(V,\mathcal{F}_{8 \left( \lambda, \varphi \right)} \right)$ are topologically equivelant. This types is denoted by $\mathscr{F}_2$.
		  \item[\bf 2.] By using stratergic coordinate, we have $\left( \R^3 \right)^* \cong \R_+ \times S^2$. The submersion $p: V \cong \R^2 \times \left( \R^3 \right)^* \cong \R^2 \times \R_+ \times S^2 \rightarrow \R \times S^2$ defined by:
			\[
				p(x;y; r; \phi; \theta) := (x + r\cos \phi \sin \theta; \phi; \theta)
			\]
is a fibre bundle which each fibre is (isomorphic to) $\R \times \R_+$. Each fibre is connective (half-plane) and that is each leaf of the foliation $\left(V,\mathcal{F}_{4}\right)$. So, the MD(5,3C)-foliations belong to $\mathscr{F}_1$-type come from fibre bundle on $\R \times S^2$.

Now, we consider continuos map $\rho: \R^2 \times V \to V$ defined by:
				\[
					\rho \left( \left( r; a \right); \left( x; y; z+it; s \right) \right) = \left( x - \left( \sin a \right) z - \left( 1 - \cos a \right) t; y + r; \left( z + it \right)e^{-ia}; e^a s \right)
				\]
			It is easy to check that the Lie group $\R^2$ acts continuously on the foliated manifold $V$ by $\rho$. Morever, under $\rho$-action, orbit through $\left( \alpha; \beta; \gamma + i \delta; \sigma \right) \in V$ is:
				\begin{equation}\label{rho-action}
					\rho_{\left( \alpha; \beta; \gamma + i \delta; \sigma \right)} = \left\lbrace \left( \alpha - \left( \sin a \right) \gamma - \left( 1 - \cos a \right) \delta; \beta + r; \left( \gamma + i \delta \right)e^{-ia}; e^a \sigma \right): y, a \in \R \right\rbrace
				\end{equation}
and that is leave of the foliation $\left( V, \mathcal{F}_{8 \left(1, \frac{\pi}{2}\right)}\right)$. So, the MD(5,3C)-foliations belong to $\mathscr{F}_2$-type are given by $\rho$-action. The proof is complete.
	\end{enumerate}
\end{proof}

\section{$K$-theory for the MD(5,3C)-foliations}

	In this section, we study $K$-theory for the leaf space of MD(5,3C)-foliations. First, we introduce something about $K$-theory and the method of $KK$-functors to characterize $C^*$-algebras.

\subsection{$K$-theory and characterization of $C^*$-algebras by $KK$-functor}

	Topological $K$-theory is a generalized cohomology theory with compact supports. It is well adapted to the algebraic framework where the locally compact space $X$ is replaced by the {\C} $C_0(X)$ of all the complex-valued continuous functions on $X$ vanishing at infinity. The group $K_*(A)$ makes sense for any {\C} $A$ and one has $K^*(X) \cong K_*\left(C_0(X)\right)$ (see \cite{Ro-La-La} or \cite{Ta}). Note that the $K$-theory of $C^*$-algebras is invariant under stable isomorphism, i.e., $K_*(A\otimes \K) \cong K_*(A)$ for every {\C} $A$. 

	In algebraic $K$-theory, each short exact sequence (or \emph{an extension}) of $C^*$-algebras:
\begin{equation}\label{SES}
   \xymatrix{0 \ar[r] & J \ar[r]^i & A \ar[r]^{\mu} & B \ar[r] & 0}
\end{equation} 
gives rise to a six-term circulative exact sequence (\cite{Ro-La-La}):
\begin{equation}\label{6-ES}
        \xymatrix{K_0(J)\ar[r]^{i_*} & K_0(A)\ar[r]^{\mu_*} & K_0(B)\ar[d]^{\delta_0}\\
        K_1(B)\ar[u]^{\delta_1} & K_1(A) \ar[l]_{\mu_*} & K_1(J)\ar[l]_{i_*}} 
\end{equation}
where $\delta_0$ and $\delta_1$ are called \emph{the connecting maps}. When $J$ and $B$ in \eqref{SES} are given $C^*$-algebras, but $A$ is an unknown one, calculation of the connecting maps of \eqref{6-ES} gives the $K$-groups $K_0(A)$ and $K_1(A)$. 

In fact, we obtain more by this calculation, and this may be seen as another motivation for studying $K_*(A)$. The extension \eqref{SES} defines $A$ as an element in $KK$-group $\Ext(B, J)$ of Kasparov \cite{Kas}. Namely, by the universal coefficient theorem (see \cite{Ro}) we have the following exact sequence ($j \in \mathbb{Z}/2\mathbb{Z})$:
	\begin{equation}\label{UCT}
   	\xymatrix{
   	0 \to \oplus \Ext^1_{\mathbb{Z}} \left(K_j(B), K_j(J)\right) \ar[r] & \Ext (B,J) \ar[r]^{\hspace{-1.5cm} \gamma} & \oplus \Ext^1_{\mathbb{Z}} \left(K_j(B), K_{j+1}(J)\right) \to 0}
	\end{equation} 
In \eqref{UCT}, the map $\gamma$ associates to the class in $\Ext(B, J)$ defined by the short exact sequence \eqref{SES} the pair $(\delta_0, \delta_1)$ of the diagram \eqref{6-ES}. $\Ext^1_\mathbb{Z}$ is defined as in homological algebra (see \cite{Ma}). In this paper, $K_j(B)$ and $K_j(J)$ are always free abelian groups, hence $\Ext^1_\mathbb{Z} \left(K_j(B), K_j(J) \right) = 0$ for $j = 0,1$. Thus, the pair $(\delta_0, \delta_1)$ determines {\C} $A$ as an element of $\Ext(B, J)$. In general, this is only sufficient to determine the so-called \emph{``stable type''} of the extension \eqref{SES}. Morever, when the extension is absorbing, the pair $(\delta_0, \delta_1)$ determines \eqref{SES} up to unitary equivalence (see \cite[Section 7]{To}). So, the pair $(\delta_0, \delta_1)$ is called the \emph{index invariant} of {\C} $A$. 

Generally, if an unknown {\C} $A$ is embedded in two repeated extensions of the following form:
\begin{equation}\label{RSES-1}
   \xymatrix{0 \ar[r] & J_1 \ar[r]^{i_1} & A \ar[r]^{{\mu}_1} & B_1 \ar[r] & 0}
\end{equation}
\begin{equation}\label{RSES-2}
   \xymatrix{0 \ar[r] & J_2 \ar[r]^{i_2} & B_1 \ar[r]^{{\mu}_2} & B_2 \ar[r] & 0}
\end{equation}
where $J_1, J_2$ and $B_2$ are given $C^*$-algebras, then two pairs of the connecting maps of the six-term circulative exact sequences associated to the extensions \eqref{RSES-1} and \eqref{RSES-2} are called the \emph{index invariant system} of $A$.
    
In the decades 1970s-1980s, works of Diep \cite{Di75}, Rosenberg \cite{Ro}, Kasparov \cite{Kas}, Son and Viet \cite{So-Vi},\ldots have shown that $K$-functors are well adapted to characterize a large class of group $C^*$-algebras.

\subsection{The Connes' $C^*$-algebras}

Recall that, in 1982, studying foliated manifolds, Connes \cite{Co82} introduced the notion of {\C} $C^*(V, \F)$ associated to a measured foliation $(V, \F)$. In general, the leaf space $V/\F$ of a foliation $(V, \F)$ with the quotient topology is a fairly untractable topological space. The Connes' {\C} $C^*(V, \F)$ represents the leaf space $V/\F$ of $(V, \F)$ in the following sense: when the foliation $(V, \F)$ comes from a fibration (with connected fibers) $p: V \to B$ on some locally compact basis $B$, in particular the leaf space of $(V, \F)$ in this case is the basis $B$, then $C^{*}(V, \F)$ is isomorphic to ${C_0}(B)\otimes \K$, where $\K = \K(\mathcal{H})$ denotes the {\C} of compact operators on an infinite dimensional separable Hilbert space $\mathcal{H}$. For such foliations $(V, \F)$, $K_{*}{\left( C^*(V, \F)\right)}$ coincides with the $K$-theory of the leaf space $B = V/\F$. Therefore, for an arbitrary foliation $(V, \F)$, Connes \cite{Co82} used $K_* \left( C^*(V, \F) \right)$ to define the $K$-theory for the leaf space of $(V, \F)$.

For a given foliation $(V, \F)$, the Connes' {\C} $C^*(V, \F)$ is generated by certain functions on the graph (or holonomy groupoid) $G$ (\cite[Section 2]{To}) of the foliations $(V, \F)$. The construction of $C^*(V, \F)$ is fairly complete (see \cite[Sections 5-6]{Co82}), so we do not recall this construction here. Some properties of the Connes' {\C} are introduced below.

\begin{prop}[{\cite[Proposition 2.1.4]{To}}]\label{Connes-1}
Topologically equivalent foliations yield isomorphic $C^*$-algebras.
\end{prop}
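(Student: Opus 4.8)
The plan is to follow the construction of Connes' {\C} $C^*(V, \F)$ through a topological equivalence. Recall that $C^*(V, \F)$ is the completion of a convolution $*$-algebra of (half-density valued) compactly supported functions on the holonomy groupoid (graph) $G = G(V, \F)$, whose elements are triples $(x, y, [\gamma])$ with $x, y$ in a common leaf $L$ and $[\gamma]$ a holonomy class of leafwise path from $x$ to $y$; the convolution and involution are built from the source and range maps $s, r \colon G \to V$ together with a continuous leafwise density. Hence it suffices to produce, from a topological equivalence $h \colon (V, \F) \to (V', \F')$, an isomorphism of the associated graphs compatible with all of this structure.

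First I would construct the lift $\widehat{h} \colon G(V, \F) \to G(V', \F')$. Since $h$ is a homeomorphism carrying each leaf $L$ of $\F$ onto a leaf $h(L)$ of $\F'$, it sends a leafwise path $\gamma$ from $x$ to $y$ to the leafwise path $h \circ \gamma$ from $h(x)$ to $h(y)$. The crucial claim is that $h$ carries the \emph{holonomy class} of $\gamma$ to that of $h \circ \gamma$: a smooth transversal $T$ at $x$ is mapped by $h$ to a set $h(T)$ which, although possibly not smooth, still meets each nearby leaf of $\F'$ in a single point near $h(x)$, i.e.\ $h(T)$ is a topological transversal, and the holonomy return map of $h \circ \gamma$ computed along $h(T)$ is the conjugate $h \circ r_\gamma \circ h^{-1}$ of the holonomy return map $r_\gamma$ of $\gamma$ along $T$. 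Since triviality of holonomy does not depend on the chosen transversal, it follows that $\gamma$ has trivial holonomy if and only if $h \circ \gamma$ does, and more generally that $[\gamma] \mapsto [h \circ \gamma]$ is well defined and bijective. Setting $\widehat{h}(x, y, [\gamma]) := (h(x), h(y), [h \circ \gamma])$ then yields a bijective groupoid morphism, and one checks that it is a homeomorphism for the graph topologies, since these are generated by charts built from the foliation charts of $V$ (resp.\ $V'$), which $h$ carries to one another up to homeomorphism.

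Next I would upgrade $\widehat{h}$ to an isomorphism of the convolution $*$-algebras, and hence of their $C^*$-completions. Composition with $\widehat{h}$ carries compactly supported continuous functions on $G(V', \F')$ to such functions on $G(V, \F)$ and intertwines the source and range maps; choosing a continuous leafwise density on $V$ and transporting it by $h$ to $V'$, this composition also intertwines the convolution products and involutions, so it extends to a $*$-isomorphism $C^*(V', \F') \cong C^*(V, \F)$. The only point requiring care here is that the construction depends on the chosen leafwise density (equivalently, on the half-density bundle along the leaves): different continuous choices give canonically isomorphic {\C}s, so the fact that $h$ is merely a homeomorphism, and transports densities only to continuous (not smooth) ones, is harmless.

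I expect the real obstacle to be the holonomy step in the second paragraph: showing that the holonomy groupoid --- defined a priori through the \emph{differentiable} transverse structure --- is actually a topological invariant of the partition into leaves, so that $\widehat{h}$ is well defined and a homeomorphism. In effect one must check that, for a homeomorphism preserving leaves, the holonomy cover of each leaf, and the whole family of these covers with its natural topology, is preserved; this is precisely where the non-smoothness of $h$ has to be absorbed by the transversal-conjugation argument above rather than by any differential-geometric computation.
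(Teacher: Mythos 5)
The first thing to say is that the paper contains no proof of this statement: it is quoted as is from Torpe \cite[Proposition 2.1.4]{To} and used as a black box, so the comparison can only be with the standard argument behind that citation. Your outline does follow that standard route, and its core is correct: a leaf-preserving homeomorphism $h$ conjugates germinal holonomy (triviality of the holonomy germ can be tested on any local transversal, and $h$ carries transversals to topological transversals on which the return maps are conjugated by $h$), so $[\gamma]\mapsto[h\circ\gamma]$ is well defined and $\widehat h(x,y,[\gamma])=(h(x),h(y),[h\circ\gamma])$ is an isomorphism of the graphs as topological groupoids. This is indeed the heart of the matter, and you have identified it correctly.

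The genuinely glossed step is the analytic one at the end. Pushing the smooth leafwise density forward by $h$ produces only a \emph{continuous} Haar system on the graph of $(V',\mathcal{F}')$, and the pushforward of a smooth measure under a homeomorphism of a leaf need not be absolutely continuous with respect to the smooth measure class (a homeomorphism of $\mathbb{R}$ can carry a fat Cantor set onto a null set, creating a singular part). Consequently your assertion that ``different continuous choices give canonically isomorphic $C^*$-algebras'' is not covered by the usual independence-of-density argument, which compares only mutually absolutely continuous leafwise densities with continuous positive Radon--Nikodym derivative (one twists by its square root). What your construction directly gives is an isomorphism of $C^*(V,\mathcal{F})$ with the groupoid $C^*$-algebra of the graph of $(V',\mathcal{F}')$ equipped with the \emph{transported} Haar system; to reach $C^*(V',\mathcal{F}')$ itself you still need independence of the groupoid $C^*$-algebra from the choice of Haar system, which in general yields only a Morita equivalence (hence stable isomorphism in the separable case) and must be promoted to an isomorphism, e.g.\ by invoking stability of the foliation algebras involved, or by replacing the transported system with an equivalent one in the smooth class. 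In the situations where the present paper actually uses the proposition (foliations given by fibrations and by $\mathbb{R}^2$-actions, where the algebras are $C_0(B)\otimes\K$ or reduced crossed products) this can be checked directly, but as a proof of the general statement the density/Haar-system step needs to be filled in.
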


\begin{prop}[{\cite[Proposition 2.1.5]{To}}]\label{Connes-2}
Assume that the foliation $(V, \F)$ comes from an action of a Lie $H$ in such a way that the graph G is given as $V \times H$. Then $C^*(V, \F)$ is isomorphic to the reduced crossed product $C_0(V)\rtimes H$ of $C_0(V)$ by $H$.
\end{prop}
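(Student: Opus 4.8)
The plan is to unwind Connes' construction of $C^{*}(V,\F)$ from \cite[Sections 5--6]{Co82} in the situation where the holonomy groupoid $G$ is identified, as a Lie groupoid, with the transformation groupoid $V\rtimes H$ of the given action of $H$ on $V$, and to match it step by step with the usual construction of the reduced crossed product $C_0(V)\rtimes_r H$.

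First I would record the structure of $G = V\rtimes H$: the unit space is $V$, the range and source maps are $r(v,h)=v$ and $s(v,h)=v\cdot h$, the composition is $(v,h)\cdot(v\cdot h,h')=(v,hh')$, and the inverse is $(v,h)^{-1}=(v\cdot h,h^{-1})$. A fixed left Haar measure $dh$ on $H$ then furnishes a left-invariant Haar system on the fibres $G^{v}=\{v\}\times H\cong H$ of $r$. Since the foliation comes from the action, the fundamental vector fields of the $H$-action give a global trivialization of the tangent bundle along the leaves, hence of the bundle $\Omega^{1/2}$ of half-densities along the leaves that enters Connes' definition; fixing this trivialization identifies $C_c(G,\Omega^{1/2})$ with $C_c(V\times H)\cong C_c\!\left(H,C_0(V)\right)$ as topological vector spaces.

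Next I would check that, under this identification, Connes' convolution and involution on $C_c(G,\Omega^{1/2})$ are precisely the crossed-product operations: writing $f(v,h)$ for the image of $f$, Connes' product becomes
\[
(f_1*f_2)(v,h)=\int_{H}f_1(v,h_1)\,f_2\!\left(v\cdot h_1,\;h_1^{-1}h\right)\,dh_1,
\]
which is the convolution of $C_0(V)\rtimes_{\alpha}H$ for the action $\alpha_h(\xi)(v)=\xi(v\cdot h)$, while the involution becomes $f^{*}(v,h)=\Delta_H(h)^{-1}\,\overline{f(v\cdot h,h^{-1})}$, the modular factor $\Delta_H$ arising from comparing the half-density normalizations along $G^{v}$ and along $G_{v}$. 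Then I would match the representations defining the $C^{*}$-norm: for each $v\in V$, Connes' regular representation of $C_c(G,\Omega^{1/2})$ on $L^{2}(G^{v})\cong L^{2}(H,dh)$ coincides, after the trivialization, with the representation of $C_0(V)\rtimes_{\alpha}H$ induced from the point-evaluation character $\mathrm{ev}_v$ of $C_0(V)$. Since $\{\mathrm{ev}_v:v\in V\}$ separates the points of $C_0(V)$, the family of these induced representations is faithful on $C_0(V)\rtimes_r H$; hence the two $C^{*}$-completions of the common dense $*$-subalgebra $C_c(V\times H)$ coincide, which gives $C^{*}(V,\F)\cong C_0(V)\rtimes H$.

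The step I expect to be the main, though essentially technical, obstacle is the bookkeeping of the half-density and Haar/modular conventions. Connes uses $\Omega^{1/2}$ to obtain a canonical convolution algebra without fixing measures along the leaves, and one has to verify that the chosen trivialization reproduces the left-Haar convolution together with the $\Delta_H^{-1}$ twist in the involution in exactly the normalization used for crossed products, and that ``the regular representation of the groupoid on the $r$-fibres'' is really ``the representation of $C_0(V)\rtimes H$ induced from $C_0(V)$''. This is where the hypothesis that the graph is literally $V\times H$ is indispensable: it guarantees that no reduction by holonomy of stabilizer subgroups occurs and that $G$ is the full transformation groupoid. Finally, by Proposition \ref{Connes-1} the isomorphism type of $C^{*}(V,\F)$ depends only on the topological type of $(V,\F)$, so this identification need only be carried out once and can then be applied to every MD(5,3C)-foliation of the form described by an $\R^{2}$-action in Theorem \ref{MD(5,3C)-foliations' classification}.
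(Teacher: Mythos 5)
The paper itself gives no proof of this proposition: it is quoted from Torpe \cite[Proposition 2.1.5]{To} (going back to Connes \cite{Co82}), so there is no internal argument to compare against. Your sketch is precisely the standard proof behind the cited result --- identify the graph with the transformation groupoid $V\rtimes H$, trivialize the leafwise half-density bundle by the fundamental vector fields of the action, check that Connes' convolution and involution on $C_c(G,\Omega^{1/2})$ become the crossed-product operations on $C_c\left(H,C_0(V)\right)$, and match the regular representations on $L^2(G^v)$ with the representations induced from the evaluations $\mathrm{ev}_v$, whose family is faithful for the reduced norm --- and it is correct; the half-density/modular bookkeeping you flag is routine and is in any case immaterial in the paper's application, where $H=\R^2$ is unimodular and amenable, so the reduced and full crossed products coincide.
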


\begin{prop}[{\cite[Proposition 2.1.6]{To}}]\label{Connes-3}
Assume that the foliation $(V, \F)$ is given by a fibration (with connected fibers) $p: V \to B$. Then the graph 
$G = \lbrace (x, y) \in V \times V | p(x) = p(y) \rbrace $ which is a submanifold of $V \times V$ and $C^*(V, \F)$ is isomorphic to 
$C_0(B)\otimes \K$.
\end{prop}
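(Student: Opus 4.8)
The plan is to establish the two assertions separately: first, that the graph (holonomy groupoid) $G$ of $(V,\F)$ equals the fibre product $\{(x,y)\in V\times V : p(x)=p(y)\}$ and is a submanifold of $V\times V$; and second, that $C^*(V,\F)\cong C_0(B)\otimes\K$. For the first, observe that since $p\colon V\to B$ has connected fibres, every leaf $L=p^{-1}(b)$ is exactly a fibre and has trivial holonomy (the transverse data is locally constant along a fibre, because near a fibre the foliation looks like the trivial product foliation of $U\times F$), so the holonomy groupoid collapses to the pair groupoid of each fibre, i.e.\ to the naive relation $G=(p\times p)^{-1}(\Delta_B)$, where $\Delta_B\subset B\times B$ is the diagonal. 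Since $p$ is a submersion, so is $p\times p\colon V\times V\to B\times B$; hence $p\times p$ is transverse to $\Delta_B$, and $G$ is a closed submanifold of $V\times V$ of codimension $\dim B$.

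For the second assertion I would unwind Connes' construction of $C^*(V,\F)$. Fix a smooth positive density $\nu=(\nu_b)_{b\in B}$ along the fibres of $p$; together with a density on $B$ this trivialises the half-density bundle $\Omega^{1/2}$ over $G$, so that $C_c^\infty(G,\Omega^{1/2})$ becomes $C_c^\infty(G)$ and the convolution product takes the concrete form
\[
(k_1*k_2)(x,y)=\int_{\{z\,:\,p(z)=p(x)\}} k_1(x,z)\,k_2(z,y)\,d\nu_{p(x)}(z),
\]
that is, fibrewise composition of integral kernels. Restricting a kernel $k$ to the fibre over $b\in B$ yields a smoothing, hence compact, operator $k_b$ on the separable Hilbert space $\mathcal{H}_b:=L^2(p^{-1}(b),\nu_b)$, and $b\mapsto k_b$ is a continuous, compactly supported section of the continuous field of $C^*$-algebras $\big(\K(\mathcal{H}_b)\big)_{b\in B}$. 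One then checks that such sections are dense and that the completed norm is the supremum of the fibrewise operator norms; this identifies $C^*(V,\F)$ with the algebra of sections vanishing at infinity of this field.

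It remains to trivialise the field. Local triviality of $p$ over a chart $U\subseteq B$ furnishes fibrewise unitaries $\mathcal{H}_b\cong L^2(F)$ for $b\in U$ (absorbing the Radon--Nikodym derivative relating $\nu_b$ to a fixed reference measure on $F$), so $(\mathcal{H}_b)_{b\in B}$ is a locally trivial Hilbert bundle over $B$ whose fibre is the infinite-dimensional separable Hilbert space $\mathcal{H}:=L^2(F)$ --- infinite-dimensional because the leaves are connected manifolds of positive dimension. Since $B$ is paracompact and the unitary group $U(\mathcal{H})$ is contractible by Kuiper's theorem, this Hilbert bundle is globally trivial; hence the associated field of compact operators is trivial, $\big(\K(\mathcal{H}_b)\big)_{b\in B}\cong B\times\K(\mathcal{H})$, and therefore
\[
C^*(V,\F)\;\cong\;C_0\big(B,\K(\mathcal{H})\big)\;\cong\;C_0(B)\otimes\K .
\]
I expect the principal difficulty to lie in this last trivialisation together with the bookkeeping that matches Connes' half-density definition of $C^*(V,\F)$ to the concrete ``continuous field of compact operators'' picture; once that identification is secured, triviality of the field is a soft consequence of Kuiper's theorem --- or, alternatively, of the vanishing of the Dixmier--Douady invariant, which vanishes here because the field arises from a genuine Hilbert bundle rather than merely a projective one.
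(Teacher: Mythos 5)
The paper does not actually prove this proposition: it is quoted from Torpe \cite[Proposition 2.1.6]{To} (ultimately from Connes \cite{Co82}), so there is no internal argument to compare yours against; judged on its own, your proof is essentially the standard argument from those sources and is correct in substance. Identifying the graph with $(p\times p)^{-1}(\Delta_B)$ is legitimate because the connected fibres are leaves without holonomy, and the realisation of $C^*(V,\F)$ as the sections vanishing at infinity of the field $\bigl(\K(L^2(p^{-1}(b),\nu_b))\bigr)_{b\in B}$ is exactly how Connes and Torpe proceed; note also that Connes' norm is the supremum over the regular representations on the fibres, which here are precisely your fibrewise kernel representations, so the norm identification is immediate and no fullness/amenability discussion is needed. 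The one caveat concerns your trivialisation step: in Connes' and Torpe's statement ``fibration'' means a submersion with connected fibres, not necessarily a locally trivial bundle, so the appeal to local triviality of $p$ (to get a locally trivial Hilbert bundle and then Kuiper) is not available in that generality; there one should instead invoke the Dixmier--Douady theorem that a continuous field of infinite-dimensional separable Hilbert spaces over a finite-dimensional paracompact base is trivial, which is in the spirit of the alternative ending you sketch, though as you phrase it (vanishing of the Dixmier--Douady class of the field of compacts) it still presupposes local triviality of the field. For the foliations treated in this paper the fibrations are genuinely locally trivial with contractible fibre, so your Kuiper route suffices for every use made of the proposition here.
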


\begin{prop}[{\cite[Proposition 2.1.7]{To}}]\label{Connes-4}
Let $(V, \F)$ be a foliation. If $V' \subset V$ is an open subset of foliated manifold V and ${\F}^{'}$ is the restriction of $\F$ to $V^{'}$. Then the graph $G'$ of $(V', \F')$ is an open subset in the graph $G$ of $(V, \F)$ and $C^*(V', \F')$ can be canonically embedded into $C^*(V, \F)$.
\end{prop}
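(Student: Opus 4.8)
The plan is to argue entirely at the level of the graph (holonomy groupoid) $G$ of $(V,\F)$: I would show that the graph $G'$ of $(V',\F')$ is canonically an \emph{open subgroupoid} of $G$, and then deduce the embedding of Connes' $C^*$-algebras from the fact that $C^*(V,\F)$ is built locally over the graph. First I would recall that a point of $G$ is a holonomy class $[\gamma]$ of a leafwise path $\gamma\colon[0,1]\to L$ in a leaf $L$ of $\F$, with source $s[\gamma]=\gamma(0)$ and range $r[\gamma]=\gamma(1)$, and that the (smooth and topological) structure of $G$ is the one generated by foliation charts. Since $V'$ is open, the leaves of $\F'$ are exactly the connected components of $L\cap V'$ over $L\in\F$, so a point of $G'$ is a holonomy class --- computed with transversals contained in $V'$ --- of a leafwise path whose image lies in $V'$. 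Sending such a class to the same path, now regarded inside the ambient leaf $L$, defines a map $\iota: G'\to G$.

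Next I would check that $\iota$ is an open groupoid embedding. A sufficiently small transversal through a point of $V'$ is at once a transversal for $\F$ and for $\F'$ and generates the same holonomy germs, so $\iota$ is well defined and injective, and it plainly respects source, range, multiplication and units. Its image is a subgroupoid because concatenation and reversal of $V'$-valued leafwise paths stay $V'$-valued and the units of $G'$ are precisely the subset $V'\subset V$ of units of $G$. Openness is the place where the hypothesis that $V'$ is open really enters: if $[\gamma]\in\iota(G')$ is represented by a path with image in $V'$, then by compactness of $[0,1]$ every leafwise path $C^0$-close to $\gamma$ still takes values in $V'$, so an entire holonomy-box neighbourhood of $[\gamma]$ lies in $\iota(G')$. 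Thus $\iota$ identifies $G'$ with an open subgroupoid of $G$, and the leafwise half-density bundle $\Omega^{1/2}$ of $G$ restricts to that of $G'$; in particular $G'$ is an open subset of $G$.

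With this in hand the $C^*$-level statement would go as follows. By Connes' construction $C^*(V,\F)$ is a completion of $C^\infty_c(G,\Omega^{1/2})$ for the convolution product, the natural involution, and the norm coming from the regular representations $\lambda_x$ acting by convolution on $L^2(G_x)$, $G_x=s^{-1}(x)$. Since $G'$ is open in $G$ and carries the restricted half-density bundle, extension by zero gives a linear injection $E: C^\infty_c(G',\Omega^{1/2})\hookrightarrow C^\infty_c(G,\Omega^{1/2})$; because $G'$ is a subgroupoid, $G'\cdot G'\subset G'$ and $(G')^{-1}=G'$, so $E$ carries convolution to convolution and involution to involution and is a $*$-homomorphism. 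For $x\in V'$ the fibre $G'_x=G_x\cap G'$ is open in $G_x$, and one checks (again using $G'\cdot G'\subset G'$ and $(G')^{-1}=G'$) that $L^2(G'_x)\subset L^2(G_x)$ is $\lambda_x(Ef)$-invariant with $\lambda_x(Ef)|_{L^2(G'_x)}=\lambda'_x(f)$, the regular representation of $C^\infty_c(G',\Omega^{1/2})$. Hence $\|f\|_{C^*(V',\F')}\le\|Ef\|_{C^*(V,\F)}$ for every $f$, so $E$ is injective on completions and extends to the asserted canonical embedding $C^*(V',\F')\hookrightarrow C^*(V,\F)$.

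The hard part will not be producing the embedding --- the block-diagonal computation above already does that --- but showing it is \emph{isometric}, i.e.\ that restriction to the open subgroupoid does not enlarge the $C^*$-norm. That requires controlling the complementary block of $\lambda_x(Ef)$ on $L^2(G_x\setminus G'_x)$, decomposing that space according to how the arrows of $G$ translate the fibres of $G'$ and recognising each piece again as a reduced representation of $C^*(V',\F')$ --- exactly as, for an open subgroup $H\le G$ of a locally compact group, $\lambda^G|_{C_c(H)}$ is an amplification of $\lambda^H$. Since only the injective embedding is needed in what follows, I would be content to quote this last refinement from the groupoid $C^*$-algebra literature rather than reprove it here.
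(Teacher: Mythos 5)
The paper offers no proof of this statement at all: it is quoted as a known result from \cite[Proposition 2.1.7]{To} (going back to Connes \cite{Co82}), so there is no in-paper argument to measure yours against. Your outline is the standard proof from that literature, and its geometric half is sound: since $V'$ is open, small transversals through points of $V'$ lie in $V'$, so holonomy germs of leafwise paths contained in $V'$ agree whether computed in $(V',\F')$ or in $(V,\F)$; this makes $\iota\colon G'\to G$ a well-defined injective groupoid morphism, and your compactness argument for openness of the image is the right one. The extension-by-zero map on compactly supported half-density sections is then a $*$-homomorphism exactly because $G'$ is an open subgroupoid carrying the restricted half-density bundle.

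One deduction in your analytic half is stated backwards. From the lower bound $\|f\|_{C^*(V',\F')}\le\|Ef\|_{C^*(V,\F)}$, obtained by restricting $\lambda_x(Ef)$ to the invariant subspace $L^2(G'_x)$, you cannot yet conclude that $E$ ``extends to the asserted canonical embedding'': continuity of $E$ on the completion requires the opposite bound $\|Ef\|_{C^*(V,\F)}\le\|f\|_{C^*(V',\F')}$, and only once that contractivity is in hand does your lower bound give injectivity (indeed isometry) of the extended map. You do recognize this as the hard point in your final paragraph --- controlling $\lambda_x(Ef)$ on $L^2(G_x\setminus G'_x)$ by decomposing it into copies of regular representations of $G'$, as for an open subgroup of a locally compact group --- and you propose to quote it; since the paper itself merely cites Torpe for the whole proposition, that is acceptable, but the order of the two inequalities in your middle paragraph should be corrected so that the extension is established before injectivity is claimed.
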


\begin{prop}[{\cite[Subsection 2.2]{To}}]\label{Connes-5}
Let $V' \subset V$ be a saturated open subset in the foliated manifold $V$ of a foliation $(V, \F)$. Denote by $\F', \F"$ the restrictions of $\F$ on $V'$ and $V \setminus V'$, respectively. Then $C^*(V, \F')$ is an ideal in $C^*(V, \F)$. Moreover, if the foliation $(V, \F)$ is given by a suitable action of an amenable Lie group $H$ such that the subgraph $G \setminus G'$ is given as $(V \setminus V') \times H$ then $C^*(V, \F)$ can be canonically embedded into the following exact sequence:
\begin{equation}
\xymatrix{0 \ar[r] & C^*\left( V, \F' \right) \ar[r] & C^*\left( V, \F \right) \ar[r] & C^*\left( V \setminus V', \F" \right) \ar[r] & 0}. 
\end{equation}
\end{prop}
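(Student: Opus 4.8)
The proposition makes two assertions, and the plan is to establish both by working with the graph (holonomy groupoid) $G$ of $(V,\F)$ together with its convolution $C^*$-algebra, taking Propositions \ref{Connes-2} and \ref{Connes-4} as the main inputs. Put $V'' := V \setminus V'$ and let $G'$, $G''$ be the graphs of $(V',\F')$ and $(V'',\F'')$. Since $V'$ and $V''$ are saturated and the two endpoints of a point of $G$ lie on one and the same leaf, one has $G' = \{(x,y)\in G : x \in V'\} = \{(x,y)\in G : y \in V'\}$ and $G'' = \{(x,y)\in G : x,y \in V''\}$, so that $G = G' \sqcup G''$ with $G'$ open and $G''$ closed in $G$.

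\emph{The ideal property.} By Proposition \ref{Connes-4}, since $G'$ is open in $G$, extension of functions by zero gives a canonical embedding $C^*(V,\F') \hookrightarrow C^*(V,\F)$, so it is enough to see that its image is a two-sided ideal. If $f$ is supported in $G'$ and $g \in C_c(G)$, then a term of the convolution $f*g$ at a point $(x,z)$ is of the form $f(x,y)g(y,z)$ with $f(x,y)\neq 0$, forcing $x \in V'$; as $(x,z)\in G$ joins points of a single leaf, also $z \in V'$, so $f*g$ is supported in $G'$, and symmetrically for $g*f$. Hence $C_c(G')$ is a two-sided ideal in $C_c(G)$, and passing to $C^*$-completions shows that $C^*(V,\F')$ is a closed two-sided ideal of $C^*(V,\F)$.

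\emph{The exact sequence.} It then remains to identify the quotient $C^*(V,\F)/C^*(V,\F')$ with $C^*(V'',\F'')$, and here the plan is to pass to crossed products. As $V'$ and $V''$ are saturated they are $H$-invariant, so restriction of functions gives an $H$-equivariant short exact sequence $0 \to C_0(V') \to C_0(V) \to C_0(V'') \to 0$. Proposition \ref{Connes-2} gives $C^*(V,\F) \cong C_0(V)\rtimes H$ from $G = V \times H$; moreover $G' = V'\times H$ (automatic from $H$-invariance of $V'$) and, by hypothesis, $G\setminus G' = G''$ equals $V''\times H$, so the same proposition yields $C^*(V,\F') \cong C_0(V')\rtimes H$ and $C^*(V'',\F'') \cong C_0(V'')\rtimes H$. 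Applying the crossed-product functor to the equivariant sequence and invoking amenability of $H$ --- which makes the full and reduced crossed products coincide and makes the crossed product an exact functor --- produces $0 \to C_0(V')\rtimes H \to C_0(V)\rtimes H \to C_0(V'')\rtimes H \to 0$; transporting this along the identifications above gives the short exact sequence in the statement.

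\emph{Main obstacle.} The ideal property is essentially formal; the real content is exactness of the middle sequence at the $C^*$-level, and this is exactly where amenability of $H$ cannot be dropped --- without it the reduced crossed-product functor need not preserve exactness and the full and reduced completions of the three groupoid algebras may disagree, so $C^*(V,\F)/C^*(V,\F')$ would not be canonically $C^*(V'',\F'')$. One also has to check that the three applications of Proposition \ref{Connes-2} fit into one commuting diagram with the restriction homomorphisms, and it is precisely the assumption that $G\setminus G'$ is given as $(V\setminus V')\times H$ that supplies this compatibility.
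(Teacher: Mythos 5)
The paper does not prove this proposition at all: it is quoted from Torpe \cite[Subsection 2.2]{To}, so your argument can only be measured against the standard (Torpe/Connes) proof. Your treatment of the ideal property is fine and is exactly the standard convolution-support argument. The gap is in the second half. To write $C^*(V,\F)\cong C_0(V)\rtimes H$ and $C^*(V,\F')\cong C_0(V')\rtimes H$ you invoke Proposition \ref{Connes-2}, which requires the graphs $G$ and $G'$ to be the product groupoids $V\times H$ and $V'\times H$. But the hypothesis of the proposition grants this only for the subgraph over the closed complement, $G\setminus G'\cong (V\setminus V')\times H$. If $G\cong V\times H$ were part of the hypothesis, the clause about $G\setminus G'$ would be redundant; the weaker hypothesis is intentional, and in Torpe's applications (Reeb components) the total graph is genuinely not a product. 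Your parenthetical claim that $G'=V'\times H$ is ``automatic from $H$-invariance of $V'$'' likewise presupposes $G=V\times H$. So what you prove is a special case of the statement, not the statement.

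In the general case the identification of the quotient must be carried out at the groupoid level, and this is exactly where amenability enters: restriction of kernels to the closed invariant subgraph gives a surjection $C^*(V,\F)\to C^*\bigl(V\setminus V',\F''\bigr)$ (here one uses the hypothesis that the graph of $(V\setminus V',\F'')$ is $(V\setminus V')\times H$) whose kernel contains $C^*(V,\F')$, and the quotient $C^*(V,\F)/C^*(V,\F')$ is a completion of $C_c\bigl(G\setminus G'\bigr)$ in a $C^*$-norm lying between the reduced and the maximal norms of the transformation groupoid $(V\setminus V')\times H$; amenability of $H$ makes these two norms coincide, which is what forces the kernel to be exactly $C^*(V,\F')$ and identifies the quotient with $C_0(V\setminus V')\rtimes H\cong C^*(V\setminus V',\F'')$. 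Your appeal to exactness of the reduced crossed product functor for amenable $H$ is the right analytic ingredient in spirit, but it only becomes available after the unjustified reduction of all three algebras to crossed products. As it happens, in the one place this paper applies the proposition the total graph is $V\times\R^2$, so your argument would cover that application; it does not, however, prove Proposition \ref{Connes-5} as stated.
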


By Proposition \ref{Connes-1}, for brevity, we denote by $C^* \left( \mathscr{F}_1 \right)$ and $C^* \left( \mathscr{F}_2 \right)$ the Connes' $C^*$-algebras assoiated to MD(5,3C)-foliations belong to $\mathscr{F}_1$-type and $\mathscr{F}_2$-type, respectively. Morever, as a direct consequence of Theorem \ref{MD(5,3C)-foliations' classification} and Propositions \ref{Connes-1} $\div$ \ref{Connes-3}, we have the following assertion.     

\begin{corol}[{{\bf Analytical description of $C^* \left( \mathscr{F}_1 \right)$ and $C^* \left( \mathscr{F}_2 \right)$}}]\label{Analytical description}
\begin{enumerate-1}
   \item[\bf 1.] The Connes' $C^*$-algebras of all MD(5,3C)-foliations belong to the $\mathscr{F}_1$-type are isomorphic to $C_0 \left(\R \times S^2\right) \otimes \K$. 
   \item[\bf 2.] The Connes' $C^*$-algebras of all MD(5,3C)-foliations belong to the $\mathscr{F}_2$-type are isomorphic to the reduced crossed product $C_0(V)\rtimes_\rho \R^2$ where $V \cong \R^2 \times \left(\R^3\right)^* \cong \R^2 \times \left( \mathbb{C} \times \R \right)^*$.
\end{enumerate-1}
\end{corol}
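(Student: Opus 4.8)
The plan is to obtain both isomorphisms directly from the structural results already established, with essentially all of the substance residing in the verification that the hypotheses of Propositions~\ref{Connes-2} and~\ref{Connes-3} are met by suitably chosen model foliations. By Proposition~\ref{Connes-1} it suffices to compute Connes' $C^*$-algebra of one representative in each of the two topological types, and I would take $(V,\F_4)$ as the representative of $\mathscr{F}_1$ and $(V,\F_{8(1,\pi/2)})$ as that of $\mathscr{F}_2$, these being precisely the foliations onto which the explicit homeomorphisms $h_{1(\lambda_1,\lambda_2)},\dots,h_7$ and $h_{8(\lambda,\varphi)}$ constructed in the proof of Theorem~\ref{MD(5,3C)-foliations' classification} carry all the remaining members of the respective types.

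For part~1, I would invoke the description of $(V,\F_4)$ extracted in the proof of Theorem~\ref{MD(5,3C)-foliations' classification}(2.1): the map
\[
	p\colon V\cong\R^2\times\R_+\times S^2\longrightarrow\R\times S^2,\qquad p(x;y;r;\phi;\theta)=(x+r\cos\phi\sin\theta;\,\phi;\,\theta),
\]
is a fibre bundle whose fibres are exactly the leaves of $\F_4$ and are half-planes, hence connected. Proposition~\ref{Connes-3} then applies verbatim and gives $C^*(V,\F_4)\cong C_0(\R\times S^2)\otimes\K$, with $\K=\K(\mathcal{H})$ for $\mathcal{H}$ the $L^2$-space of a fibre; since the fibre $\R\times\R_+$ is a non-compact manifold, $\mathcal{H}$ is separable and infinite-dimensional, so $\K$ is the $C^*$-algebra of compact operators on a separable infinite-dimensional Hilbert space, as required. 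Together with the reduction of the first paragraph this yields the claim for all foliations of type $\mathscr{F}_1$.

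For part~2, I would use the continuous action $\rho$ of $\R^2$ on $V\cong\R^2\times(\mathbb{C}\times\R)^*$ displayed in the proof of Theorem~\ref{MD(5,3C)-foliations' classification}(2.2), whose orbits~\eqref{rho-action} are exactly the leaves of $\F_{8(1,\pi/2)}$. Proposition~\ref{Connes-2} then gives $C^*(V,\F_{8(1,\pi/2)})\cong C_0(V)\rtimes_\rho\R^2$ (a reduced crossed product, which here agrees with the full one because $\R^2$ is amenable), and combining with the first paragraph yields the stated description of $C^*(\mathscr{F}_2)$. The step that must be handled with care --- and the one I expect to be the main obstacle --- is the verification of the hypothesis of Proposition~\ref{Connes-2}, namely that $\F_{8(1,\pi/2)}$ comes from the action $\rho$ ``in such a way that the graph is given as $V\times\R^2$'': concretely one has to show, from the explicit formula for $\rho$, that $(v,g)\mapsto(v,\rho_g(v))$ identifies $V\times\R^2$ with the holonomy groupoid of $\F_{8(1,\pi/2)}$, which amounts to a direct analysis of the $\rho$-orbits, the delicate case being the points with $\sigma=0$ where the $\R^2$-orbit closes up. Once this identification is secured, the rest of the corollary is an immediate consequence of Theorem~\ref{MD(5,3C)-foliations' classification} and Propositions~\ref{Connes-1},~\ref{Connes-2},~\ref{Connes-3}.
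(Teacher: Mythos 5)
Your argument follows exactly the route the paper intends: the paper states this corollary without a separate proof, as an immediate consequence of Proposition~\ref{Connes-1} (reduce to the representatives $(V,\F_4)$ and $(V,\F_{8(1,\frac{\pi}{2})})$), Proposition~\ref{Connes-3} applied to the fibration $p$ over $\R\times S^2$, and Proposition~\ref{Connes-2} applied to the action $\rho$. Your additional remark that one must check the graph of $\bigl(V,\F_{8(1,\frac{\pi}{2})}\bigr)$ is $V\times\R^2$ (the hypothesis of Proposition~\ref{Connes-2}, delicate precisely on the leaves with $\sigma=0$ where the $\rho$-orbit map is periodic) is a legitimate point that the paper leaves implicit, so your proposal is correct and essentially coincides with the paper's argument.
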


\subsection{$C^*(\mathscr{F}_2)$ as extension of $C^*$-algebras}

	Many foliations are defined by decomposing suitably the given manifold, constructing foliations on the smaller pieces and then patching together to obtain a foliation of the total manifold. If the pieces when included in the total manifold are saturated with respect to the foliation, this may be viewed as patching together bits of leaf space. Translating into the language of $C^*$-algebras, this construction of the foliation $(V, \F)$ gives us an extension of the form \eqref{SES}. Then, to study the $K$-theory of the leaf space of this foliation $(V, \F)$, we need to compute the index invariant of $C^*(V,\F)$  with respect to the constructed extension of the form \eqref{SES}. If considered $C^*$-algebras are isomorphic to the reduced crossed products of the form $C_0(M) \rtimes H$, where $M$ is some locally compact space and $H$ is some Lie group acting on $M$, we can use the Thom-Connes isomorphism (\cite{Co81}) to compute the connecting maps $\delta_0,\delta_1$.
	
	By Corollary \ref{Analytical description}, we just study $K$-theory for the leaf space of MD(5,3C)-foliations belong to the non-trivial $\mathscr{F}_2$-type and characterize their Connes' $C^*$-algebras by $KK$-functor. We choose the foliation $\left( V, \F_{8\left(1,\frac{\pi}{2}\right)}\right)$ deputising for the $\mathscr{F}_2$-type. By Lemma \ref{Connes-1}, we have $C^* \left( \mathscr{F}_2 \right) \cong C^* \left( V, \F_{8\left(1,\frac{\pi}{2}\right)}\right)$.

	Before we study the $K$-theory for the leaf spaces of MD(5,3C)-foliations belong to the $\mathscr{F}_2$-type, we need to choose some suitable saturated open submanifolds in foliated manifold and construct the extensions of the form \eqref{SES}. 

	Let  $U, W$ be the following submanifolds of $V$:
		\[
			\begin{array}{l}
				U := \{ (x;y;z;t;s) \in V: s \neq 0 \} \cong \R^2 \times \R^2 \times \left( \R \backslash \{0\} \right), \\
				W := V \backslash U = \{ (x;y;z;t;s) \in V: s = 0 \} \cong \R^2 \times \left( \R^2 \backslash \{0\} \right).
			\end{array}
		\]
It is easy to see that the action $\rho$ in formula \eqref{rho-action} preserves the subsets $U, W$.

Let $\iota$ and $\mu$ be the inclusions and the restrictions as follows:
	\begin{equation}\label{iota-mu}
		\begin{array}{l l}
	\iota: C_0 (U) \to C_0 (V), & \mu: C_0 (V) \to C_0(W)
\end{array}
	\end{equation}
where each function of $C_0(U)$ is extented to the one of $C_0(V)$ by taking zero-value outside $V$. It is obvious that $\iota$ and $\mu$ are $\rho$-equivariant. Moreover, the following sequence is equivariantly exact:
\begin{equation}\label{ESES}
        \xymatrix{0 \ar[r] & C_0(U)\ar[r]^{\iota} & C_0(V) \ar[r]^{\mu} & C_0(W) \ar[r] & 0},
\end{equation}
Now, we denote by $(U, \F)$ and $(W, \F)$ the restrictions of $\left( V, \F_{8\left(1,\frac{\pi}{2}\right)}\right)$ to $U$ and $W$, respectively. The following theorem is the first of the main results of the section.

\begin{thm}
$C^*(\mathscr{F}_2)$ admits the following canonical extension:
        \begin{equation}\label{Gamma}
				\xymatrix
					{
						0 \ar[r] & J \ar[r]^{\hspace{-15pt} \hat{\iota}} & C^*(\mathscr{F}_2) \ar[r]^{\hspace{12pt} \hat{\mu}} & B \ar[r] & 0
					}\tag{$\gamma$}
			\end{equation}
where:
\[
			\begin{array}{l}
			J  \cong C_0(U) \rtimes_\rho \R^2  \cong C_0 \left( \R^2 \times \R^2 \times \R^* \right) \rtimes_\rho \R^2  \cong C_0 \left( \R^3 \sqcup \R^3 \right) \otimes \K,\\			
			B  \cong C_0(W) \rtimes_\rho \R^2  \cong C_0 \left( \R^2 \times \left(\R^2 \backslash \{0\}\right) \right) \rtimes_\rho \R^2 \cong C_0(\R \times \R_+) \otimes \K,\\			
			C^*(\mathscr{F}_2)  \cong C_0(V) \rtimes_\rho \R^2.
			\end{array}
			\]
and the homomorphismes $\hat{\iota}$ and $\hat{\mu}$ are defined by:
       \[
        \begin{array}{l l l}
        	 \left({\hat{\iota}f} \right) (r, s) = \iota f (r, s), & \left({\hat{\mu}f} \right) (r, s) = \mu f (r, s); & (r, s) \in \R^2.
        \end{array}
        \]
\end{thm}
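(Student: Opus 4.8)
The plan is to produce the extension $(\gamma)$ by applying a crossed‑product functor to the equivariant extension \eqref{ESES} and then identifying its three terms. By part~2 of Corollary~\ref{Analytical description} we already have $C^*(\mathscr F_2)\cong C^*\!\big(V,\F_{8(1,\pi/2)}\big)\cong C_0(V)\rtimes_\rho\R^2$. Since $U$ is a $\rho$‑invariant (hence $\F$‑saturated) open submanifold of $V$ with complement $W$, the sequence \eqref{ESES} is $\rho$‑equivariant; because $\R^2$ is amenable the functor $\,\cdot\,\rtimes_\rho\R^2$ preserves exactness, so --- this is exactly the content of Proposition~\ref{Connes-5} applied with the amenable group $H=\R^2$ and the saturated open set $U$ --- one obtains
\[
0\to C_0(U)\rtimes_\rho\R^2\xrightarrow{\ \hat\iota\ }C_0(V)\rtimes_\rho\R^2\xrightarrow{\ \hat\mu\ }C_0(W)\rtimes_\rho\R^2\to0,
\]
where $\hat\iota=\iota\rtimes\mathrm{id}$ and $\hat\mu=\mu\rtimes\mathrm{id}$; on the dense $*$-subalgebra of compactly supported functions $\R^2\to C_0(\,\cdot\,)$ these act by the pointwise formulas in the statement. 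Thus $(\gamma)$ has the asserted shape with $J:=C_0(U)\rtimes_\rho\R^2$ and $B:=C_0(W)\rtimes_\rho\R^2\cong C^*(W,\F)$, and it remains to compute $J$ and $B$.

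For $J$ I would first check that $\rho$ acts \emph{freely and properly} on $U=\{s\neq0\}$: freely because $r$ is recovered from the $y$‑coordinate and $a$ from $\ln|s|$, properly because the $a$‑flow is conjugate to a translation in $\ln|s|$ and the $r$‑flow is a translation in $y$. Hence $U\to U/\R^2$ is a principal $\R^2$‑bundle, and the submanifold $\{\,y=0,\ |s|=1\,\}$ --- a disjoint union of two copies of $\R^3$, one for each sign of $s$ --- is a global slice. This trivializes the bundle, $U\cong(\R^3\sqcup\R^3)\times\R^2$ equivariantly, so $J\cong C_0(\R^3\sqcup\R^3)\otimes\big(C_0(\R^2)\rtimes\R^2\big)\cong C_0(\R^3\sqcup\R^3)\otimes\K$. (Equivalently, $(U,\F)$ is literally the fibration $U\to U/\R^2$ with connected fibre $\R^2$, and Proposition~\ref{Connes-3} gives the same answer.)

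For $B$ the situation is genuinely more delicate, and this is where I expect the main obstacle to lie: $\rho$ is \emph{not} free on $W=\{s=0\}$, since there the $a$‑flow is $2\pi$‑periodic and every point has stabilizer $\{0\}\times2\pi\mathbb{Z}$. The geometric input from \eqref{rho-action} is that on $W$ the foliation $\F$ equally well arises from the \emph{free} action of $\R\times S^1$ obtained by letting the $a$‑line descend to $S^1=\R/2\pi\mathbb{Z}$ (its orbits are the same cylinders $\cong\R\times S^1$). This $\R\times S^1$‑action is free and proper, and using the $\rho$‑invariant functions $\xi=x-t$ and $R=|z+it|$ one finds the global slice $\{\,y=0,\ z+it\in\R_{>0}\,\}\cong\R\times\R_+$, trivializing $W\cong(\R\times\R_+)\times(\R\times S^1)$ equivariantly. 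Then Proposition~\ref{Connes-2} (equivalently Proposition~\ref{Connes-3}, viewing $(W,\F)$ as the fibration $W\to W/(\R\times S^1)=\R\times\R_+$ with connected fibre) gives $B\cong C^*(W,\F)\cong C_0(\R\times\R_+)\otimes\big(C_0(\R\times S^1)\rtimes(\R\times S^1)\big)\cong C_0(\R\times\R_+)\otimes\K$.

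The only step that is not essentially formal is this last one: one must verify carefully that $\rho|_W$ really does factor through the free $\R\times S^1$‑action with the stated quotient, and reconcile the transformation‑group picture $C_0(W)\rtimes_\rho\R^2$ with the holonomy $C^*$-algebra $C^*(W,\F)$ --- i.e.\ not treat $\rho|_W$ as though it were a free $\R^2$‑action, the difference being precisely a residual $C(S^1)\rtimes\R$ arising from the $2\pi$‑periodicity (to be handled via the Green--Rieffel imprimitivity theorem for the transitive translation action of $\R$ on $S^1=\R/2\pi\mathbb{Z}$). Once the orbit geometry of $\rho$ from \eqref{rho-action} is in hand, everything else --- the exactness in the first paragraph, the freeness and properness over $U$, and the pointwise description of $\hat\iota,\hat\mu$ --- is routine.
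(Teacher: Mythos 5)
Your construction of the extension and your identification of $J$ follow the paper's own route: the paper also obtains \eqref{Gamma} by applying Proposition~\ref{Connes-5} to the saturated decomposition $V=U\sqcup W$ with the amenable group $H=\R^2$, and it identifies $J\cong C_0(\R^3\sqcup\R^3)\otimes\K$ by exhibiting $(U,\F)$ as a fibration with connected fibres over $\R^3\sqcup\R^3$ and invoking Proposition~\ref{Connes-3}; your free-and-proper-action/global-slice argument is just a more explicit version of the same identification. Up to this point the proposal is fine and essentially coincides with the paper.

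The step you defer for $B$, however, is a genuine gap, and it cannot be closed in the form stated. The quotient that Proposition~\ref{Connes-5} actually produces is the $C^*$-algebra of the restricted graph $W\times\R^2$, i.e.\ $C_0(W)\rtimes_\rho\R^2$; and exactly by the Green imprimitivity argument you propose, the equivariant trivialization $W\cong(\R\times\R_+)\times(\R\times S^1)$ gives $C_0(W)\rtimes_\rho\R^2\cong C_0(\R\times\R_+)\otimes\bigl(C_0(\R)\rtimes\R\bigr)\otimes\bigl(C(S^1)\rtimes\R\bigr)\cong C_0(\R\times\R_+\times S^1)\otimes\K$, whose $K_1$ is $\mathbb{Z}$, whereas $K_1\bigl(C_0(\R\times\R_+)\otimes\K\bigr)=0$. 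So the two descriptions of $B$ you hope to ``reconcile'' are not isomorphic, not even stably, and no imprimitivity argument will merge them. The conceptual reason is the isotropy you spotted: inside $V$ the cylinder leaves of $W$ have holonomy group $\mathbb{Z}$ (the generating loop acts transversally by $s\mapsto e^{2\pi}s$), so the graph of $(V,\F)$ over $W$ is the action groupoid $W\times\R^2$, strictly larger than the holonomy groupoid of $(W,\F)$ regarded as a foliation of $W$ alone, which is the fibration groupoid computed by Proposition~\ref{Connes-3} and yields the smaller algebra $C_0(\R\times\R_+)\otimes\K$. Be aware that the paper's own proof silently commits the corresponding error --- it applies Proposition~\ref{Connes-2} to $(W,\F)$ although its graph is not $W\times\R^2$, and then Proposition~\ref{Connes-3} to the same algebra --- and indeed the paper's later computation of the index invariant, which uses the Thom--Connes isomorphism to get $K_j(B)\cong K_j\bigl(C_0(W)\bigr)\cong\mathbb{Z}$ for both $j$, is consistent only with $B\cong C_0(W)\rtimes_\rho\R^2\cong C_0(\R\times\R_+\times S^1)\otimes\K$, not with $B\cong C_0(\R\times\R_+)\otimes\K$. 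The correct completion of your argument is therefore to keep $B=C_0(W)\rtimes_\rho\R^2$ and identify it via imprimitivity as $C_0(\R\times\R_+\times S^1)\otimes\K$, rather than to argue that it equals $C^*(W,\F)\cong C_0(\R\times\R_+)\otimes\K$.
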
    

\begin{proof}
	We note that the graph of $\left(U, \F\right)$ is given as $U \times \R^2$. So, by using Proposition \ref{Connes-2}, one has:
        $$J := C^*\left(U, \F\right) \cong C_0 \left(U\right) \rtimes_{\rho} \R^2.$$  
Similarly, we have $B := C^*(W,\F) \cong C_0 (W) \rtimes_{\rho} \R^2$. Note that $U$ and $W$ are saturated submanifolds in $V$. So we obtain extension \eqref{Gamma} by using Proposition \ref{Connes-5}. 

	On the other hand, it is easily seen that the foliation $(U,\F)$ can be derived from the submersion $p: U \cong \R^2 \times \R^2 \times \left( \R \backslash \{0\} \right) \to \R^3 \times \{-1;1\} \cong \R^3 \sqcup \R^3$ defined by:
		\[
				p(x;y;re^{i\theta};s) := \left( x-r \sin \theta; re^{i\theta}; {\rm sgn} (s) \right).
		\]
Hence, in view of Proposition \ref{Connes-3}, we get $J \cong C_0 \left( \R^3 \sqcup \R^3 \right) \otimes \K$. Similarly, the foliation $(W,\F)$ can be derived from the submersion $q: W \cong \R^2 \times \left( \R^2 \backslash \{0\} \right) \to \R \times \R_+$ defined by:
		\[
				q(x;y;re^{i\theta}) := (x-r \sin \theta; r).
		\]
Hence, we also have $B := C^* \left(W, \F \right) \cong C_0 \left(\R \times \R_+\right) \otimes \K$. The proof is complete.
\end{proof}

\subsection{Computing the invariant of $C^*(\mathscr{\F}_2)$}

	We now study the $K$-theory for the leaf spaces of MD(5,3C)-foliations belong to the $\mathscr{F}_2$-type and characterize $C^*(\mathscr{F}_2)$ by $K$-functors. Namely, we will compute the invariant of $C^*(\mathscr{F}_2)$ in $KK$-group of Kasparov (\cite{Kas}).

	First, we recall that extension \eqref{Gamma} defines $C^*(\mathscr{F}_2)$ as an element in $KK$-group $\Ext(B, J)$. This element is called the invariant of $C^*(\mathscr{F}_2)$. Namely, we have the following definition.

\begin{definition}
	The element $(\gamma)$ corresponding to the extension \eqref{Gamma} in $\Ext(B, J)$ is called \emph{the index invariant of $C^*(\mathscr{F}_2)$} and denoted by ${\rm Index} \, C^*(\mathscr{F}_2)$.
\end{definition}

	As the analyses in the introduction, ${\rm Index} \, C^*(\mathscr{F}_2)$ determines the so-called ``stable type'' of $C^*(\mathscr{F}_2)$ in $\Ext (B, J)$. To introduce the last main result, we need some lemmas as follows.

\begin{lemma}\label{Lemma1}
	Set $I := C_0 \left(\R^2 \times \left( \R \backslash \{0\} \right) \right)$ and $A := C_0 \left(\R^2 \backslash \{0\} \right)$. Then, the following diagram is commutative:
		\[
\xymatrix{\dots \ar[r] & K_j(I) \ar[r] \ar[d]^{\beta_2} & K_j\bigl(C_0(\R^3 \backslash \{0\}) \bigr) \ar[r] \ar[d]^{\beta_2} & K_j(A) \ar[r] \ar[d]^{\beta_2} & K_{j+1}(I) \ar[r] \ar[d]^{\beta_2} & \dots \\
            \dots \ar[r] & K_j \bigl(C_0(U)\bigr) \ar[r] & K_j\bigl(C_0(V)\bigr) \ar[r] & K_j \bigl(C_0(W)\bigr)\ar[r] & K_{j+1} \bigl(C_0(U)\bigr)\ar[r] & \dots}
            \]
where $\beta_2$ is the Bott isomorphism (see \cite[Theorem 9.7]{Ta} or \cite[Corollary VI.3]{Co81}), and $j \in \mathbb{Z}/2\mathbb{Z}$.
\end{lemma}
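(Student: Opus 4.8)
The plan is to exhibit the vertical Bott maps $\beta_2$ as the morphism of long exact sequences induced by a commutative diagram of short exact sequences of $C^*$-algebras, so that naturality of the $K$-theory six-term sequence does the rest. First I would identify the top row: starting from the open-closed decomposition of $\R^3$ into $\R^2 \times (\R\setminus\{0\})$ (two half-spaces, open and saturated) and the closed subspace $\R^2 \times \{0\} \cong \R^2$, I note that $\R^3\setminus\{0\}$ already appears if we instead decompose the punctured space; more precisely, writing $\R^3 = \R^2\times\R$ and deleting the origin only from the central plane is not what is wanted. The correct reading is: the top row is the six-term exact sequence attached to the extension
\[
\xymatrix{0 \ar[r] & C_0\!\left(\R^2\times(\R\setminus\{0\})\right) \ar[r] & C_0\!\left(\R^3\setminus\{0\}\right) \ar[r] & C_0\!\left(\R^2\setminus\{0\}\right) \ar[r] & 0},
\]
which comes from the saturated open subset $\{s\neq 0\}$ of $\R^3\setminus\{0\} = \{(x_1,x_2,s)\neq 0\}$ whose complement is $(\R^2\setminus\{0\})\times\{0\}$. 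Thus $I=C_0(\R^2\times(\R\setminus\{0\}))$ is the ideal, $C_0(\R^3\setminus\{0\})$ the extension, and $A=C_0(\R^2\setminus\{0\})$ the quotient — exactly the top row displayed.

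Next I would identify the bottom row as the six-term sequence of the extension \eqref{ESES}, namely $0\to C_0(U)\to C_0(V)\to C_0(W)\to 0$, which is already established in the text. The key observation is that $U \cong \bigl(\R^2\times(\R\setminus\{0\})\bigr)\times\R^2$, $V\cong\bigl(\R^3\setminus\{0\}\bigr)\times\R^2$ — wait, that is false since $V\cong\R^2\times(\R^3)^*$ with $(\R^3)^*\cong\R^3\setminus\{0\}$ — so indeed $V \cong (\R^3\setminus\{0\})\times\R^2$ after reordering coordinates, $U\cong(\R^2\times(\R\setminus\{0\}))\times\R^2$, and $W\cong(\R^2\setminus\{0\})\times\R^2$. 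Hence each algebra in the bottom row is the tensor product of the corresponding algebra in the top row with $C_0(\R^2)$, and the horizontal maps in the bottom row are precisely the inclusion and restriction maps of the top row tensored with $\mathrm{id}_{C_0(\R^2)}$. Therefore the extension \eqref{ESES} is obtained from the top extension by applying the exact functor $-\,\widehat{\otimes}\,C_0(\R^2)$.

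The third step is to invoke naturality of the $K$-theory long exact sequence with respect to morphisms of extensions, together with the Bott periodicity isomorphism $\beta_2 : K_j(D) \xrightarrow{\ \cong\ } K_j(D\otimes C_0(\R^2)) = K_{j}(D\otimes C_0(\R^2))$ (i.e. $K_j(D)\cong K_{j+2}(SSD)\cong K_j(D\otimes C_0(\R^2))$), as cited from \cite[Theorem 9.7]{Ta} or \cite[Corollary VI.3]{Co81}. Since $\beta_2$ is natural in $D$ — it is implemented by Kasparov product with the Bott class in $KK(\mathbb{C},C_0(\R^2))$, hence commutes with any $\ast$-homomorphism and with the connecting maps of any extension — the three columns $K_j(I)\to K_j(C_0(U))$, $K_j(C_0(\R^3\setminus\{0\}))\to K_j(C_0(V))$, $K_j(A)\to K_j(C_0(W))$ given by $\beta_2$ assemble into a morphism from the top six-term sequence to the bottom one. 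Writing the two six-term sequences out as (infinite, periodic) long exact sequences and filling in the squares with $\beta_2$, each square commutes: the squares involving only horizontal $\ast$-homomorphisms commute by functoriality of $\beta_2$, and the square involving the index/exponential connecting map commutes by naturality of the boundary map under Kasparov product. This yields exactly the displayed commutative ladder.

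The main obstacle — really the only non-formal point — is pinning down that the connecting-map squares commute, i.e. that $\beta_2 \circ \delta = \delta \circ \beta_2$ where the left $\delta$ is the boundary map of the top extension and the right $\delta$ that of the bottom one. This is standard (the boundary map of an extension is given by Kasparov product with a fixed class in $\mathrm{Ext}$, and Bott periodicity is Kasparov product with the Bott element, and these products associate), but it does require the identification of the bottom extension as the top one tensored by $C_0(\R^2)$, which in turn rests on the coordinate identifications $V\cong(\R^3\setminus\{0\})\times\R^2$, $U\cong(\R^2\times(\R\setminus\{0\}))\times\R^2$, $W\cong(\R^2\setminus\{0\})\times\R^2$ and on checking that the maps $\iota,\mu$ of \eqref{iota-mu} are, under these identifications, $\iota_{\mathrm{top}}\otimes\mathrm{id}$ and $\mu_{\mathrm{top}}\otimes\mathrm{id}$. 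Once that is in place, the proof is a one-line appeal to naturality.
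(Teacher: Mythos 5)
Your proposal is correct and follows essentially the same route as the paper: identify the bottom extension \eqref{ESES} with the top extension $0 \to I \to C_0(\R^3\setminus\{0\}) \to A \to 0$ tensored by $C_0(\R^2)$ via the coordinate identifications of $U$, $V$, $W$, and then apply naturality of the Bott isomorphism, which the paper handles by citing \cite[Theorem 9.7; Corollary 9.8]{Ta}. Your extra justification of the connecting-map squares via Kasparov products only spells out what that citation already provides.
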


\begin{proof}
Let
	$$\kappa: I = C_0 \left(\R^2 \times \left( \R \backslash \{0\} \right) \right) \to C_0 \left( \R^3 \backslash \{0\} \right),
     \qquad \upsilon: C_0 \left( \R^3 \backslash \{0\} \right) \to A = C_0 \left(\R^2 \backslash \{0\} \right)$$
be the inclusion and restriction defined similarly as $\iota$ and $\mu$ in \eqref{iota-mu}. One gets the exact sequence as follows:
\begin{equation}
\xymatrix{0\ar[r] & I \ar[r]^{\hspace{-.9cm} \kappa} & C_0 \left( \R^3 \backslash \{0\} \right) \ar[r]^{\hspace{.9cm} \upsilon } & A \ar[r] & 0}.
\end{equation}
Note that
\[
					\begin{array}{l}
						C_0(V) = C_0 \left( \R^2 \times \left( \R^3 \backslash \{0\} \right) \right) \cong C_0 \left( \R^2 \right) \otimes C_0 \left( \R^3 \backslash \{0\} \right), \\
						C_0(U) = C_0 \left( \R^2 \times \R^2 \times \left( \R \backslash \{0\} \right) \right) \cong C_0 \left( \R^2 \right) \otimes I, \\
						C_0(W) = C_0 \left( \R^2 \times \left( \R^2 \backslash \{0\} \right) \right) \cong C_0 \left( \R^2 \right) \otimes A.
					\end{array}
				\]            
So the extension \eqref{ESES} can be identified to the following one:
\begin{equation}
 \xymatrix{0\ar[r] & C_0 \left( \R^2 \right) \otimes I \ar[r]^{\hspace{-.9cm} id \otimes \kappa} & C_0 \left( \R^2 \right) \otimes C_0 \left( \R^3 \backslash \{0\} \right) \ar[r]^{\hspace{.8cm}id \otimes \upsilon} & C_0 \left( \R^2 \right) \otimes A \ar[r] & 0}.
\end{equation}
Now, using \cite[Theorem 9.7; Corollary 9.8]{Ta} we obtain the assertion of the lemma.
\end{proof}

\begin{lemma}[{{\bf \cite[Lemma 3.3.6]{Vu3}}}]\label{Lemma2}
\begin{enumerate-1}
   \item[\rm (a)] $K_0(I) =0$, and $K_1(I) \cong \mathbb{Z}^2$.
   \item[\rm (b)] $K_0(A) \cong \mathbb{Z}$, and $K_1(A) \cong \mathbb{Z}$.
   \item[\rm (c)] $K_0 \left(C_0 \left(\R^3 \backslash \{0\} \right) \right) =0$, and $K_1 \left(C_0 \left(\R^3 \backslash \{0\} \right) \right) \cong \mathbb{Z}^2$ .
\end{enumerate-1}
\end{lemma}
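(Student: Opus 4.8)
The three algebras here are commutative, so for each of them $K_j(\,\cdot\,)$ is the topological $K$-theory of its Gelfand spectrum. The plan is to identify each spectrum, up to homeomorphism, with a product of a sphere and a Euclidean factor (or with a disjoint union of Euclidean spaces) and then to read off the $K$-groups from Bott periodicity together with the well-known $K$-theory of $S^1$ and $S^2$.

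For (a): I would use that $\mathbb{R}\setminus\{0\}$ is the disjoint union of the two open rays $(-\infty,0)$ and $(0,+\infty)$, each homeomorphic to $\mathbb{R}$, so that $\mathbb{R}^2\times(\mathbb{R}\setminus\{0\})\cong\mathbb{R}^3\sqcup\mathbb{R}^3$ and hence $I\cong C_0(\mathbb{R}^3)\oplus C_0(\mathbb{R}^3)$. Bott periodicity gives $K_j\bigl(C_0(\mathbb{R}^3)\bigr)\cong K_{j-3}(\mathbb{C})\cong K_{j+1}(\mathbb{C})$, so $K_0\bigl(C_0(\mathbb{R}^3)\bigr)=0$ and $K_1\bigl(C_0(\mathbb{R}^3)\bigr)\cong\mathbb{Z}$; taking the direct sum yields $K_0(I)=0$ and $K_1(I)\cong\mathbb{Z}^2$.

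For (b) and (c): I would note that for each $n\ge1$ the punctured space $\mathbb{R}^{n+1}\setminus\{0\}$ is homeomorphic to $S^n\times\mathbb{R}$ via the radial coordinates $x\mapsto(x/|x|,\log|x|)$, whence $C_0\bigl(\mathbb{R}^{n+1}\setminus\{0\}\bigr)\cong C(S^n)\otimes C_0(\mathbb{R})$. The suspension isomorphism then gives $K_j\bigl(C_0(\mathbb{R}^{n+1}\setminus\{0\})\bigr)\cong K_{j+1}\bigl(C(S^n)\bigr)\cong K^{j+1}(S^n)$. Feeding in the classical values $K^0(S^1)\cong\mathbb{Z}\cong K^1(S^1)$ and $K^0(S^2)\cong\mathbb{Z}^2$, $K^1(S^2)=0$ gives, for $n=1$, $K_0(A)\cong K^1(S^1)\cong\mathbb{Z}$ and $K_1(A)\cong K^0(S^1)\cong\mathbb{Z}$, which is (b); and for $n=2$, $K_0\bigl(C_0(\mathbb{R}^3\setminus\{0\})\bigr)\cong K^1(S^2)=0$ and $K_1\bigl(C_0(\mathbb{R}^3\setminus\{0\})\bigr)\cong K^0(S^2)\cong\mathbb{Z}^2$, which is (c).

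Since the statement is quoted from \cite[Lemma 3.3.6]{Vu3}, no new idea is needed, and the argument above is a bare application of Bott periodicity. An alternative, if one wishes to avoid the $K$-theory of spheres, is to feed the Bott values $K_*\bigl(C_0(\mathbb{R}^n)\bigr)$ into the six-term sequence of the evaluation-at-the-origin extension $0\to C_0(\mathbb{R}^n\setminus\{0\})\to C_0(\mathbb{R}^n)\xrightarrow{\mathrm{ev}_0}\mathbb{C}\to0$: for $n$ odd (the case behind (c)) the surrounding zeros force the answer, leaving a short exact sequence with free quotient, whereas for $n$ even (the case behind (b)) one has to check that $(\mathrm{ev}_0)_*\colon K_0\bigl(C_0(\mathbb{R}^2)\bigr)\to K_0(\mathbb{C})$ vanishes, which it does because $\mathrm{ev}_0$ is homotopic, through point-evaluations along a ray running to infinity, to the zero homomorphism on $C_0(\mathbb{R}^2)$ (equivalently, the Bott projection has the same rank at the origin as at infinity). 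Identifying this evaluation map --- or, in the direct approach, simply recalling that $K^0(S^2)\cong\mathbb{Z}^2$ rather than $\mathbb{Z}$ --- is the only point of the whole argument that calls for any care.
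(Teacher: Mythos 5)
Your computation is correct. Note, however, that the paper itself offers no proof of this lemma: it is simply quoted from \cite[Lemma 3.3.6]{Vu3}, so there is no in-paper argument to compare against. What you supply is the standard direct verification: $\mathbb{R}^2\times(\R\setminus\{0\})\cong\R^3\sqcup\R^3$ gives $K_0(I)=0$, $K_1(I)\cong\mathbb{Z}^2$ from $K_*\bigl(C_0(\R^3)\bigr)$ via Bott periodicity, and the radial homeomorphisms $\R^{n+1}\setminus\{0\}\cong S^n\times\R$ reduce (b) and (c) to the suspension isomorphism together with $K^*(S^1)$ and $K^*(S^2)$; all the values you feed in ($K^0(S^1)\cong K^1(S^1)\cong\mathbb{Z}$, $K^0(S^2)\cong\mathbb{Z}^2$, $K^1(S^2)=0$) are right, and the conclusions match the lemma. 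Your alternative route through the evaluation extension $0\to C_0(\R^n\setminus\{0\})\to C_0(\R^n)\to\mathbb{C}\to 0$ is also sound: for $n=3$ the six-term sequence collapses to a split short exact sequence $0\to\mathbb{Z}\to K_1\to\mathbb{Z}\to 0$, and for $n=2$ the point you flag --- that $(\mathrm{ev}_0)_*=0$ on $K_0\bigl(C_0(\R^2)\bigr)$, via the point-norm homotopy through evaluations escaping to infinity --- is exactly the one step requiring care, and your justification of it is valid. Either version would serve as a self-contained proof of the cited lemma.
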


	Now, we present the last main result of this section about the index invariant of $C^*(\mathscr{F}_2)$.

\begin{thm}[{{\bf The invariant index of $C^*(\mathscr{F}_2)$}}]
${\rm Index} \, C^*(\mathscr{F}_2) = \{\gamma\}$, where $\gamma = \begin{pmatrix} 1 \\ 1 \end{pmatrix}$ in the $KK$-group ${\rm Ext} \, (B, J) \equiv \Hom_\mathbb{Z} \left( \mathbb{Z}, \mathbb{Z}^2 \right)$.
\end{thm}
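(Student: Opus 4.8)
The plan is to compute the pair $(\delta_0,\delta_1)$ of connecting maps of the six-term sequence \eqref{6-ES} attached to the extension \eqref{Gamma} and then to read off its class in $\Ext(B,J)$ via \eqref{UCT}. Since $J\cong C_0(\R^3\sqcup\R^3)\otimes\K$ and $B\cong C_0(\R\times\R_+)\otimes\K\cong C_0(\R^2)\otimes\K$, stability of $K$-theory together with Bott periodicity give $K_0(J)=0$, $K_1(J)\cong\mathbb{Z}^2$, $K_0(B)\cong\mathbb{Z}$, $K_1(B)=0$. By \eqref{UCT}, since all the groups involved are free abelian, $\Ext(B,J)$ is isomorphic, via $\gamma$, to $\Hom_\mathbb{Z}\!\left(K_0(B),K_1(J)\right)\oplus\Hom_\mathbb{Z}\!\left(K_1(B),K_0(J)\right)\cong\Hom_\mathbb{Z}(\mathbb{Z},\mathbb{Z}^2)$, the image of the class of \eqref{Gamma} being $(\delta_0,\delta_1)$. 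Here $\delta_1$ is a homomorphism out of $K_1(B)=0$ (and into $K_0(J)=0$), hence zero, so ${\rm Index}\,C^*(\mathscr{F}_2)$ is carried entirely by $\delta_0\colon K_0(B)\cong\mathbb{Z}\to K_1(J)\cong\mathbb{Z}^2$, and it suffices to show $\delta_0$ sends a generator to $\binom{1}{1}$.

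Next I would strip away the crossed products. The extension \eqref{Gamma} is obtained by applying $-\rtimes_\rho\R^2$ to the equivariant commutative extension \eqref{ESES}, so the Connes--Thom isomorphism --- natural for equivariant $*$-homomorphisms and, since $\dim\R^2=2$ is even, degree-preserving --- gives an isomorphism of six-term exact sequences from that of \eqref{ESES} onto that of \eqref{Gamma}. Thus $\delta_0$ is identified with the connecting map $\partial_0\colon K_0\bigl(C_0(W)\bigr)\to K_1\bigl(C_0(U)\bigr)$ of \eqref{ESES}. By Lemma \ref{Lemma1} the double Bott isomorphism $\beta_2$ further identifies $\partial_0$ with the connecting map $\partial_0'\colon K_0(A)\to K_1(I)$ of the auxiliary extension $0\to I\xrightarrow{\kappa}C_0(\R^3\setminus\{0\})\xrightarrow{\upsilon}A\to0$. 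Since $K_0\bigl(C_0(\R^3\setminus\{0\})\bigr)=0$ by Lemma \ref{Lemma2}(c), the six-term sequence of this extension degenerates into the short exact sequence $0\to K_0(A)\xrightarrow{\partial_0'}K_1(I)\xrightarrow{\kappa_*}K_1\bigl(C_0(\R^3\setminus\{0\})\bigr)\xrightarrow{\upsilon_*}K_1(A)\to0$, i.e. (by Lemma \ref{Lemma2}) the exact sequence $0\to\mathbb{Z}\xrightarrow{\partial_0'}\mathbb{Z}^2\to\mathbb{Z}^2\to\mathbb{Z}\to0$. In particular $\partial_0'$ is injective and $\mathbb{Z}^2/\mathrm{im}\,\partial_0'\cong\mathbb{Z}$, so $\partial_0'(1)$ is a primitive vector $(a,b)$.

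To pin down $(a,b)=(1,1)$ I would compose $\partial_0'$ with the restrictions onto the two connected components $\R^3_\pm:=\{(z,t,s):\pm s>0\}$ of $\{s\neq0\}$. Under $I=C_0(\R^3_+)\oplus C_0(\R^3_-)$ the projections $\pi_\pm$ split $K_1(I)\cong\mathbb{Z}\oplus\mathbb{Z}$ and $(\pi_+)_*\partial_0'(1)=a$, $(\pi_-)_*\partial_0'(1)=b$. For each sign, the obvious commutative square with exact rows whose bottom is $0\to C_0(\R^3_\pm)\to C_0\bigl(\{\pm s\geq0\}\setminus\{0\}\bigr)\to A\to0$ --- the middle space being the closed half-space punctured at the origin --- shows, by naturality of connecting maps, that $(\pi_\pm)_*\circ\partial_0'$ equals the connecting map $K_0(A)\to K_1\bigl(C_0(\R^3_\pm)\bigr)$ of that half-space extension. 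Passing to polar coordinates about the origin identifies $\{\pm s\geq0\}\setminus\{0\}$ with $\R\times D^2$ ($D^2$ the closed $2$-disk), so $C_0$ of it is homotopy equivalent to $C_0(\R)$, with $K_0=0$ and $K_1\cong\mathbb{Z}$; feeding $K_0\bigl(C_0(\R^3_\pm)\bigr)=0$, $K_1\bigl(C_0(\R^3_\pm)\bigr)\cong\mathbb{Z}$ and $K_0(A)\cong K_1(A)\cong\mathbb{Z}$ into the corresponding six-term sequence and chasing forces $K_0(A)\to K_1\bigl(C_0(\R^3_\pm)\bigr)$ to be an isomorphism $\mathbb{Z}\xrightarrow{\sim}\mathbb{Z}$. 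Hence $a=b=\pm1$; choosing the generators of $K_1\bigl(C_0(\R^3_\pm)\bigr)$ to be the images of the chosen generator of $K_0(A)$ makes $\partial_0'(1)=\binom{1}{1}$, and transporting this back through $\beta_2$ and the Connes--Thom isomorphism gives $\delta_0(1)=\binom{1}{1}$, i.e. ${\rm Index}\,C^*(\mathscr{F}_2)=\{\gamma\}$ with $\gamma=\binom{1}{1}$.

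The homological bookkeeping (the degree shifts, the identification $\Ext(B,J)\cong\Hom_\mathbb{Z}(\mathbb{Z},\mathbb{Z}^2)$, and the two six-term chases) is routine once the $K$-groups of Lemma \ref{Lemma2} are in hand. The real content, and the step I expect to be the main obstacle, is the geometric identification of $\partial_0'$ with the diagonal map: one must check that $\{\pm s\geq0\}\setminus\{0\}$ decomposes as the open piece $\R^3_\pm$ glued to the closed piece $\R^2\setminus\{0\}$, that it is homeomorphic to $\R\times D^2$, and --- most delicately --- that the signs and the choices of generators are arranged so that the answer is literally $\binom{1}{1}$. As an independent check of the coincidence $a=b$, observe that the reflection $s\mapsto-s$ fixes $A$ pointwise and interchanges $\R^3_+$ with $\R^3_-$, hence is an automorphism of \eqref{ESES} forcing $\partial_0'(1)=(a,a)$; combined with primitivity this already gives $|a|=1$.
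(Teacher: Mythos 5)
Your proposal is correct, and up to the point where the connecting map has to be evaluated it follows the same route as the paper: both arguments use the Thom--Connes isomorphism (degree-preserving since $\dim\R^2=2$) to replace the six-term sequence of \eqref{Gamma} by that of \eqref{ESES}, then Lemma \ref{Lemma1} (double Bott) and Lemma \ref{Lemma2} to reduce everything to the commutative extension $0\to I\to C_0\left(\R^3\setminus\{0\}\right)\to A\to 0$, leaving only $\delta_0\colon\mathbb{Z}\to\mathbb{Z}^2$ to be computed. The difference is precisely at that last step: the paper disposes of it by citing \cite[Theorem 6]{Vu3}, whereas you compute it directly --- exactness gives injectivity and primitivity of $\delta_0(1)$, and then naturality of the connecting map applied to the punctured closed half-space extensions $0\to C_0\left(\R^3_\pm\right)\to C_0\left(\{\pm s\ge 0\}\setminus\{0\}\right)\to A\to 0$ identifies each component $(\pi_\pm)_*\circ\partial_0'$ with the connecting map of an extension whose middle algebra (homotopy equivalent to $C_0(\R)$, since $\{\pm s\ge0\}\setminus\{0\}\cong(0,\infty)\times D^2$) has $K_0=0$, $K_1\cong\mathbb{Z}$, forcing each component to be an isomorphism. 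This is sound, and it buys a self-contained proof instead of an appeal to the MD4 thesis; what it cannot do is fix signs, so the literal value $\begin{pmatrix}1\\1\end{pmatrix}$ appears only after normalizing the generators of $K_1\left(C_0(\R^3_\pm)\right)$ --- but the paper's statement is subject to the same convention, and the convention-free content (both components of $\delta_0$ are isomorphisms, $\delta_1=0$) is fully established by your argument. One small wrinkle: you assert $K_1(B)=0$ from the description $B\cong C_0(\R\times\R_+)\otimes\K$, whereas the Thom--Connes identification you (and the paper) rely on gives $K_1(B)\cong K_1(A)\cong\mathbb{Z}$, which is the value used in the paper's final diagram; this discrepancy is inherited from the paper's two descriptions of $B$ and is harmless here, because $\delta_1=0$ and $\Hom_{\mathbb{Z}}\left(K_1(B),K_0(J)\right)=0$ follow already from $K_0(J)=0$, which is the justification you should lean on.
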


\begin{proof}
The extension \eqref{Gamma} gives rise to a six-term exact sequence:
\begin{equation}\label{SES-1}
\xymatrix{K_0(J)\ar[r] & K_0 \left(C^*(\mathscr{F}_2)\right) \ar[r] & K_0(B)\ar[d]^{\delta_0}\\
        K_1(B) \ar[u]^{\delta_1} & K_1 \left(C^*(\mathscr{F}_2)\right) \ar[l] & K_1(J)\ar[l]}
\end{equation}
By \cite[Theorem 4.14]{Ro}, we have:
$$
\gamma = (\delta_0, \delta_1) \in {\rm Ext} (B, J) \cong \Hom_{\mathbb{Z}} \left(K_0(B),K_1(J)\right) \oplus \Hom_{\mathbb{Z}} \left(K_1(B), K_0(J) \right).
$$
Since the Thom-Connes isomorphism commutes with $K$-theoretical exact sequence (see \cite[Lemma 3.4.3]{To}), we have the following commutative diagram $(j\in \mathbb{Z}/2\mathbb{Z})$:
	\[
        \xymatrix{\dots \ar[r] & K_j(J) \ar[r] & K_j \left(C^*(\mathscr{F}_2)\right) \ar[r] & K_j(B) \ar[r] & K_{j+1}(J)\ar[r] & \dots\\
        \dots \ar[r] & K_j \bigl(C_0(U)\bigr)\ar[r] \ar[u]^{\varphi_j}& K_j \bigl(C_0(V)\bigr)\ar[r]\ar[u]^{\varphi_j} & K_j\bigl(C_0(W) \bigr)\ar[r]\ar[u]^{\varphi_j} & K_{j+1}\bigl(C_0(U)\bigr)\ar[r]\ar[u]^{\varphi_{j+1}} & \dots }
	\]
In view of Lemma \ref{Lemma1}, the following diagram is commutative $ (j \in \mathbb{Z}/2\mathbb{Z})$:
	\[
        \xymatrix{
                \dots \ar[r] & K_j \bigl(C_0(U)\bigr)\ar[r] & K_j \bigl(C_0(V)\bigr)\ar[r] & K_j\bigl(C_0(W) \bigr)\ar[r] & K_{j+1}\bigl(C_0(U)\bigr)\ar[r] & \dots\\
        \dots \ar[r] & K_{j}(I)\ar[r]\ar[u]^{\beta_2} & K_{j} \left(C_0 \left(\R^3 \backslash \{0\} \right) \right) \ar[r]\ar[u]^{\beta_2} & K_{j}(A)\ar[r]\ar[u]^{\beta_2} & K_{j + 1}(I)\ar[r]\ar[u]^{\beta_2}& \dots}
	\]
Thus we can compute the pair $(\delta_0, \delta_1) \in \Hom_{\mathbb{Z}} \bigl(K_0(A), K_1(I)\bigr) \oplus \Hom_{\mathbb{Z}}\bigl(K_1(A), K_0(I)\bigr)$ instead of the pair $(\delta_0, \delta_1) \in \Hom_{\mathbb{Z}} \left(K_0(B),K_1(J)\right) \oplus \Hom_{\mathbb{Z}} \left(K_1(B), K_0(J) \right)$. By Lemma \ref{Lemma2}, the $K$-theory exact sequence \eqref{SES-1} can be identified with:
\begin{equation}
\xymatrix{0\ar[r] & 0 \ar[r] & \mathbb{Z} \ar[d]^{\delta_0}\\
        \mathbb{Z}\ar[u]^{\delta_1=0} & \mathbb{Z}^2 \ar[l] & \mathbb{Z}^2\ar[l]}
\end{equation}
By \cite[Theorem 6]{Vu3}, we have $\delta_0= \begin{pmatrix} 1 \\ 1 \end{pmatrix}$. The proof is complete.
\end{proof}

\section*{Acknowledgements} The authors wish to thank the University of Economics and Law, VNU-HCMC, the Ho Chi Minh City University of Education and the Ho Chi Minh City University of Physical Education and Sports for financial supports.

\end{document}